\newif\ifdraft\draftfalse
\def\@begintheorem#1#2[#3]{%
    \def\naam{#1}
  \deferred@thm@head{\the\thm@headfont \thm@indent
    \@ifempty{#1}{\let\thmname\@gobble}{\let\thmname\@iden}%
    \@ifempty{#2}{\let\thmnumber\@gobble}{\let\thmnumber\@iden}%
    \@ifempty{#3}{\let\thmnote\@gobble}{\let\thmnote\@iden}%
    \thm@swap\swappedhead\thmhead{#1}{#2}{#3}%
    \the\thm@headpunct
    \thmheadnl 
    \hskip\thm@headsep
  }%
  \ignorespaces}
\newcommand{\kantlijndraft}[1]{\ifdraft\hspace{-\lastskip}%
\vadjust{\vspace{-1mm}\smash{\llap{{\tt #1}\hspace{8mm}}}\vspace{1mm}}\fi}
\def\voegToe#1#2#3{\immediate\write1{\string\newlabel{#1}{{#2}{#3}}}}
\newcommand{\thlabel}[1]{\voegToe{#1}{\naam\noexpand~\thetheorem}{\thepage}\kantlijndraft{#1}}
\renewcommand{\label}[1]{\voegToe{#1}{\@currentlabel}{\thepage}\kantlijndraft{#1}}
\newtheorem*{mtheorem}{Main Theorem}
\newtheorem{theorem}{Theorem}[section]
\newtheorem{lemma}[theorem]{Lemma}
\newtheorem{corollary}[theorem]{Corollary}
\newtheorem{proposition}[theorem]{Proposition}
\newtheorem{example}[theorem]{Example}
\theoremstyle{definition}
\newtheorem{definition}[theorem]{Definition}
\theoremstyle{remark}
\numberwithin{equation}{section}
\newtheorem{claim2}{\sc Claim}
\newcommand{\cupdot}{\mathbin{\mathaccent\cdot\cup}}
\newcommand{\sse}{\subseteq}						
\newcommand{\minus}{\backslash}						
\newcommand{\Un}{\bigcup}							
\newcommand{\un}{\cup}								
\newcommand{\Meet}{\bigcap}							
\newcommand{\meet}{\cap}							
\newcommand{\es}{\varnothing}						
\newcommand{\scr}[1]{\ensuremath{\mathcal{#1}}}
\newcommand{\al}{\alpha}
\newcommand{\ka}{\kappa}
\newcommand{\y}{\psi}
\def\cprime{$'$}
\def\sapirovskii{{\v{S}}apirovski{\u\i}}
\def\arhangelskii{Arhangel{\cprime}ski{\u\i}}
\begin{document}

\title{Cardinality Bounds Involving the Skew-$\lambda$ Lindel\"of Degree and its Variants}

\author{N.A. Carlson}\address{Department of Mathematics, California Lutheran University, 60 W. Olsen Rd, MC 3750, Thousand Oaks, CA 91360 USA}
\email{ncarlson@callutheran.edu}

\author{J.R. Porter}
\address{Department of Mathematics, University of Kansas, 405 Snow Hall, 1460 Jayhawk Blvd, Lawrence, KS 66045-7523, USA}
\email{porter@ku.edu}

\subjclass[2010]{54D20, 54A25, 54D10.}

\keywords{cardinality bounds, cardinal invariants}

\let\thefootnote\relax\footnotetext{Dedicated to Filippo Cammaroto, on the
occasion of his sixty-fifth birthday.}

\begin{abstract} 
We introduce a modified closing-off argument that results in several improved bounds for the cardinalities of Hausdorff and Urysohn spaces. These bounds involve the cardinal invariant $skL(X,\lambda)$, the skew-$\lambda$ Lindel\"of degree of a space $X$, where $\lambda$ is a cardinal. $skL(X,\lambda)$ is a weakening of the Lindel\"of degree and is defined as the least cardinal $\kappa$ such that if $\scr{U}$ is an open cover of $X$ then there exists $\scr{V}\in [\scr{U}]^{\leq\kappa}$ such that $|X\minus\un\scr{V}|<\lambda$. We show that if $X$ is Hausdorff then $|X|\leq 2^{skL(X,\lambda)t(X)\psi(X)}$, where $\lambda= 2^{t(X)\psi(X)}$. This improves the well-known \arhangelskii-~\sapirovskii~ bound $2^{L(X)t(X)\psi(X)}$ for the cardinality of a Hausdorff space $X$. We additionally define several variations of $skL(X,\lambda)$, establish other related cardinality bounds, and provide examples.
\end{abstract}

\maketitle


\section{Introduction and Preliminaries.} Nearly  five decades ago, Arhangel'ski\u i~\cite{arh1969} established an impressive result by showing that the cardinality of first countable, Lindel\" of Hausdorff spaces   is no larger  than that of the real numbers. More precisely, he proved that for a Hausdorff space $X$, $|X| \leq 2^{\chi(X)L(X)}$ -  the size of an arbitrary Hausdorff space is bounded above by an exponential of two properties - the global Lindel\" off degree property $L(X)$ and the local character property $\chi(X)$.  Many improvements of this cardinality bound inequality have been published since 1969; most have been presented in an excellent  survey paper by Hodel~\cite{Hodel2006}.
\vskip 2mm
In this paper, the property of Lindel\" of is extended by adding a new dimension that measures how close in cardinality a family of open sets is to covering a space.  This additional flexibility in the Lindel\" of degree is used to derive new cardinality bounds, and it provides additional insight in understanding why the cardinality bound works. 
The paper starts with the Main Theorem, a set theoretic result that develops a closing-off argument in a non-topological setting. The proof of the main theorem requires an interesting modification involving a Ramsey-theoretic argument. The Main Theorem is applied in three separate settings;  the conclusions in each of these settings improve several classical cardinality bounds.
\vskip 2mm

\emph{All spaces considered in this paper are Hausdorff}. We assume that the reader is familiar with the usual cardinal functions of $L$ (Lindel\" of), 
$\chi$ (character), $\psi$ (pseudo-character), $\psi_c$ (closed pseudo-character), and $t$ (tightness).  Our notation and terminology follow \cite{Engelking}
for general topological notions, \cite{Hod84,Hodel2006,Juhasz} for cardinal functions and \cite{por88} for H-closed spaces and H-closed extensions.

\vskip 2mm
For a space $X$, $\tau(X)$ is used to denote the topology of $X$.  For $A\subseteq X$ and $\kappa$ an infinite cardinal, let $[A]^{\leq\kappa} = \{B: B \subseteq A, |B| \leq \kappa \}$.
The set  $cl_\kappa A = \cup\{cl(B): B \in A]^{\leq\kappa}\}$ is called the \emph{$\kappa$-closure of $A$}; $A$ is \emph{$\kappa$-closed} if $cl_\kappa A = A$.  It is easy to show that $cl_\kappa A \subseteq clA$ and  $cl_\kappa A$ is $\kappa$-closed.
The set  $cl_\theta A = \{p \in X: clU \cap A \ne \es$ for $p \in U \in \tau(X) \}$ is called the $\theta$-\emph{closure} of  $A$;
$A$ is $\theta$-\emph{closed} if $cl_\theta(A)=A$. It is clear that $clA\sse cl_\theta(A)$; however it is not necessarily true that $cl_\theta(cl_\theta(A))=cl_\theta(A)$.

\vskip 2mm

The first cardinality bound we will improve is the famous \arhangelskii -\sapirovskii \hskip 1mm bound of $ |X| \leq 2^{L(X)\psi(X)t(X)}$ for a space $X$ (see 2.27 in \cite{Juhasz}).  This 1974 bound is a modification by  \sapirovskii \hskip 1mm  of the 1969  \arhangelskii \hskip 1mm bound.

\vskip 2mm

For $A \subseteq X$, the \emph{almost Lindel\" of degree} of $A$ relative to $X$, denoted by $aL(A,X)$, is the smallest infinite cardinal $\kappa$ such that for every open cover $\mathcal U$ of $A$ by sets open in $X$, there is a subfamily $ \mathcal V \in [\mathcal U]^{\leq \kappa}$ such that $A \subseteq \cup \{clU: U \in \mathcal V\}$.  The \emph{almost Lindel\" of degree} of $X$, denoted by $aL(X)$, is $aL(X,X)$.
The \emph{almost Lindel\" of degree relative to closed subsets of} $X$, denoted by $aL_c(X)$, is the supremum of the set $\{aL(C,X): C$ closed subset of $X\}$.
The next cardinality bound we will improve is the 1988 Bella-Cammaroto bound 
$2^{aL_c(X)t(X)\psi_c(X)}$ for the cardinality of a space $X$ (see \cite{BellaCammaroto1988}).

\vskip 2mm

For $A \subseteq X$, the \emph{weak Lindel\" of degree} of $A$ relative to $X$, denoted by $wL(A,X)$, is the smallest infinite cardinal $\kappa$ such that for every open cover $\mathcal U$ of $A$ by sets open in $X$, there is a subfamily $ \mathcal V \in [\mathcal U]^{\leq \kappa}$ such that $A \subseteq cl\left(\cup \mathcal V\right)$.  The \emph{weak Lindel\" of degree} of $X$, denoted by $wL(X)$, is $wL(X,X)$.
The \emph{weak Lindel\" of degree relative to closed subsets of} $X$, denoted by $wL_c(X)$, is the supremum of the set $\{wL(C,X): C$ closed subset of $X\}$.
The third cardinality bound we will improve is the 1993 Alas  bound of  $2^{wL_c(X)\chi(X)}$ for the cardinality of an Urysohn space $X$ (see \cite{Alas1993}).  As $wL(X)=wL_c(X)$ for a normal space $X$, it follows that $|X|\leq 2^{wL(X)\chi(X)}$ for normals spaces, a result first proved in 1978 by Bell, Ginsburg, and Woods \cite{BGW1978}. 

\vskip 2mm

A basic space that is useful as an example is the Kat\u etov H-closed extension of $\omega$, denoted as $\kappa \omega$. The extension $\kappa\omega$ of $\omega$ is both H-closed and Urysohn and is the Stone-\v Cech compactification of $\omega$ with a finer topology; $\tau(\kappa\omega)$ is the largest topology such that     $\tau(\beta\omega)$ is the topology generated by the base $\{cl_{\kappa\omega}U: U \in \tau(\kappa\omega)\}$ (see 4.8 in \cite{por88} for details).  It is known (see \cite{CCP2013, Hodel2006}  or straightforward to compute that $L(\beta\omega) = aL(\beta\omega) = aL_c(\beta\omega) = wL(\beta\omega) = wL_c(\beta\omega) = \omega$, 
$L(\kappa\omega) = 2^\frak c, aL(\kappa\omega) = wL(\kappa\omega) = wL_c(\kappa\omega) = \omega,  aL_c(\kappa\omega) = \frak c$, $t(\beta\omega) = \frak c, t(\kappa\omega) = \omega$, $\psi(\beta\omega) = \frak c, \psi(\kappa\omega) = \omega$, $\psi_c(\beta\omega) =  \psi_c(\kappa\omega) = \frak c$, and $\chi(\beta\omega) = \frak c, \chi(\kappa\omega) = \omega$

\vskip 2mm

We conclude this section with some  results used in the paper.

\vskip 2mm

\begin{proposition}\label{prop}
 Let $X$ be a space, $A\sse X$ a closed subset of $X$, and $\kappa$ an infinite  cardinal. Then 
 \begin{itemize}
\item[(a)] \cite{Hodel2006} $\psi_c(X) \leq \psi(X)\cdot L(X)$,
\item[(b)] \cite{Hodel2006} if $X$ is Urysohn,   $\psi_c(X) \leq \psi(X)\cdot aL_c(X)$,
\item[(c)] \cite{CCP2013} $t(X) \leq \kappa$ iff for all $B \subseteq X, cl_\kappa B = cl B$,
\item[(d)]  \cite{CCP2013}  if $t(X) \leq \kappa$, then $aL_\kappa(X) = aL_c(X)$, 
\item[(e)] \cite{Hod84} if $\psi_c(X) \leq \kappa$, then $|cl_\kappa A| \leq |A|^{\leq \kappa}$, and 
\item[(f)] \cite{BellaCammaroto1988} if $X$ is Urysohn and $\chi(X) \leq \kappa$, then $|cl_\theta A| \leq |A|^{\leq \kappa}$.
 \end{itemize}
\end{proposition}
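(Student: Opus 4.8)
All six clauses are lemmas established in the cited papers, so the proof amounts to recalling why each holds; I would organize it by technique. \emph{Parts (a) and (b)} both assert that a $\psi$-witness at a point can be promoted to a $\psi_c$-witness at the price of a Lindel\"of-type covering. Fix $p \in X$, let $\ka$ denote the right-hand side of the inequality, and pick open $U_\al \ni p$ ($\al < \ka$) with $\Meet_\al U_\al = \{p\}$. Each $F_\al := X \minus U_\al$ is closed and omits $p$. For $q \in F_\al$, Hausdorffness (case (a)) yields disjoint open $W_q \ni p$ and $V_q \ni q$, so $p \notin \closure{V_q}$; since $F_\al$ is closed, $L(F_\al) \le L(X) \le \ka$, so at most $\ka$ of the $V_q$ cover $F_\al$. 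Ranging over all $\al$ and all $q$ in the chosen subcovers produces at most $\ka$ open neighbourhoods of $p$ whose closures meet in $\{p\}$ --- any $r \ne p$ lies in some $F_\al$, hence in some chosen $V_q$, hence outside $\closure{W_q}$ --- so $\psi_c(X) \le \ka$. For (b) one replaces Hausdorff by Urysohn (so $\closure{V_q} \meet \closure{W_q} = \es$) and covers $F_\al$ by the closures $\closure{V_q}$ using $aL_c(X) \le \ka$; the same computation, with the trivial arithmetic $\ka \cdot \ka = \ka$, finishes it.

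\emph{Parts (c) and (d)} are essentially bookkeeping. Part (c) is the definition of tightness rephrased: $cl_\ka B \sse cl\,B$ always, while the reverse inclusion for every $B$ --- that every point of $cl\,B$ lies in $cl\,C$ for some $C \in \setlessequal{B}{\ka}$ --- is exactly the statement $t(X) \le \ka$. Part (d) then follows immediately: by (c), under $t(X) \le \ka$ the $\ka$-closed subsets of $X$ are precisely the closed ones, so the supremum of $aL(\,\cdot\,,X)$ over $\ka$-closed subsets equals the supremum over closed subsets.

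\emph{Parts (e) and (f)} are the quantitative ``closures stay small'' lemmas of Hodel and of Bella--Cammaroto. For (e), since $cl_\ka A = \Un\{cl\,C : C \in \setlessequal{A}{\ka}\}$ and there are at most $|A|^{\le\ka}$ such $C$, it is enough to bound $|cl\,C|$ when $|C| \le \ka$; here $\psi_c(X) \le \ka$ gives $|cl\,C| \le 2^\ka$ by the standard argument --- assign to each $x \in cl\,C$ a $\psi_c$-witnessing family $\{V(x,\al) : \al < \ka\}$ with $\Meet_\al \closure{V(x,\al)} = \{x\}$ together with the traces $C \meet V(x,\al)$ (equivalently, choices of points of $C$ from these neighbourhoods relative to a fixed well-order of $C$), and use Hausdorffness to see that distinct points of $cl\,C$ receive distinct codes, of which there are at most $2^\ka$. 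For (f) one runs the same scheme with a neighbourhood \emph{base} $\{V(x,\al) : \al < \ka\}$ at each point (available since $\chi(X) \le \ka$) and, for $x \in cl_\te A$, points $a(x,\al) \in \closure{V(x,\al)} \meet A$, now invoking the Urysohn separation property in place of Hausdorffness to tell points apart; this yields $|cl_\te A| \le |A|^{\le\ka}$.

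The soft parts (a)--(d) are routine once the separation axiom is deployed correctly, so I expect no obstacle there. The genuine content lies in the injectivity bookkeeping behind (e) and (f): making the coding map one-to-one even though the witnessing neighbourhood families are attached to points one at a time. That is precisely why these two clauses are quoted rather than reproved here, and I would follow the arguments of \cite{Hod84} and \cite{BellaCammaroto1988}.
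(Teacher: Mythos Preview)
The paper gives no proof of this proposition: each clause carries a citation and the proposition is used as a toolbox of known results, so there is nothing to compare against. Your sketches are correct reconstructions of the standard arguments from the cited sources; in particular your treatment of (a)--(d) is exactly the routine one, and for (e) and (f) you have identified the right coding map (traces $V(x,\al)\meet C$ for (e), chosen points $a(x,\al)\in\closure{V(x,\al)}\meet A$ for (f)) and the right separation axiom needed to make it injective. One small remark: in (e) the injectivity is cleanest if you note that for $x\in cl\,C$ one has $\{x\}=\Meet_\al\closure{V(x,\al)\meet C}$, so the sequence of traces literally recovers $x$; this is implicit in what you wrote but worth stating outright rather than appealing vaguely to Hausdorffness.
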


\section{A closing-off argument.}

Recall that for a set $X$ and subsets ${\scr S},{\scr T} \subseteq {\scr P}(X)$, a function $f:{\scr S} \rightarrow {\scr T}$ is an \emph{operator} if for each $A, B \in {\scr S}$ with $A \subseteq B, A \subseteq f(A) \subseteq f(B)$ and a function $h: {\scr S} \rightarrow X$ is \emph{expansive} if for $A \in \scr S$, $A \subseteq h(A)$.

We give our Main Theorem below, a closing-off argument giving a bound on the size of a set $X$. Note that $X$ need not be endowed with a topology and so the argument is entirely set-theoretic. This is not unlike Theorems 3.1 and 3.3, for example, in Hodel's survey paper \cite{Hodel2006}.
\begin{mtheorem}\label{mthm}
Let $X$ be a set,  $\ka$ an infinite cardinal, and $c :[X]^{\leq \kappa} \rightarrow [X]^{\leq 2^{\kappa}}$  an operator.  For each $x \in X$,  let $\{V(x,\al) : \al < \ka\}$ be a collection of subsets of $X$ satisfying this property:\\

{\rm\bf{(E)}} (expansive function) there is an expansive function $h:[\{V(x, \alpha):x \in X, \alpha < \kappa\}]^{\leq \kappa}\rightarrow X:\mathcal S \mapsto h(\cup \mathcal S)$.\\

{\rm\bf{(C-S)}} (cover-separation condition) if $\varnothing \ne H \in [X]^{\leq2^{\kappa}}$ and  $c(A) \subseteq H$ for all $A \in [H]^{\leq \kappa}$, then for $q \not\in H$,  there exist $A \in H^{\leq \kappa}$ and a function $f :A\rightarrow \ka$ such that $|H\backslash  h(\bigcup_{x < A}V(x,f(x)))| < 2^\kappa$ and $q \not\in h(\bigcup_{x \in A} V(x,f(x)))$. \\

\noindent Then $|X| \leq 2^{\ka}$.
\end{mtheorem}

\begin{proof}
Let $L:\scr{P}(X)\to X$ be a choice function. Let $M:[X]^{\geq 2^\kappa}\to[X]^{2^\kappa}$ be a function such that $M(A)\subseteq A$ for all $A\in[X]^{\geq 2^\kappa}$. (That is, for every subset $A$ of $X$ of cardinality at least $2^\kappa$, $M(A)$ is subset of $A$ of cardinality $2^\kappa$). Define $N:\scr{P}(X)\to\{\{x\}:x\in X\}\cup[X]^{2^\kappa}$ by
$$N(A)=\begin{cases}
\{L(A)\}&A\in[X]^{<2^\kappa}\\
M(A)&A\in[X]^{\geq2^\kappa}
\end{cases}$$

\noindent Note that $N$ is function on $\scr{P}(X)$ that chooses a point out of a subset $A$ if $A\in[X]^{<2^\kappa}$ and chooses a subset of cardinality $2^\kappa$ if $A\in[X]^{\geq2^\kappa}$. We construct a sequence $\{H_\alpha:0\leq\alpha<\kappa^+\} $ of subsets of $X$ such that for $0\leq\alpha<\kappa^+$,
\begin{enumerate}

\item $H_0=N(\varnothing)$

\item if $H_\beta$ is defined for $\beta<\alpha$, let

$$K_{\alpha} = \bigcup_{\beta<\alpha} H_\beta\cup\bigcup\{N(X\backslash h(\bigcup_{x\in A}V(x,f(x)))):A\in[\bigcup_{\beta<\alpha} H_\beta]^{\leq\kappa},f:A\to\kappa\}.$$
\end{enumerate}
Define $H_{\alpha}$ by $H_\alpha= \bigcup \{c(B):B \in [K_{\alpha}]^{\leq \kappa}\}$. \\

\noindent Now, for each $A\in[\bigcup_{\beta<\alpha} H_\beta]^{\leq\kappa}$ there are at most $\kappa^\kappa=2^\kappa$ functions $f:A\to\kappa$. Since $\left|N\left(X\backslash h(\bigcup_{x\in A}V(x,f(x))\right)\right|\leq 2^\kappa,$
 it follows that $|K_{\alpha}| \leq 2^{\kappa}$.  For each $B \in [K_{\alpha}]^{\leq \kappa}$,
 $|c(B)| \leq 2^{\kappa}$.  It follows that $|H_{\alpha}| \leq 2^{\kappa}$. Let $H=\bigcup\{H_\alpha:\alpha<\kappa^+\}$. Then $|H|\leq 2^\kappa$. Note also that for $B \in [H]^{\leq \kappa}$, there exists $\alpha<\kappa^+$ such that $B\sse H_\alpha\sse K_{\alpha+1}$ and thus $c(B) \subseteq H_{\alpha+1}\sse H$.
 
\vskip 3mm

\noindent We wish to show that $|X|\leq 2^\kappa$. Suppose by way of contradiction that $|X|>2^\kappa$. Since $|H|\leq 2^\kappa$, it follows that $|X|>|H|$ and $|X\backslash H|=|X|$.\\
 
\noindent Fix $q\in X\backslash H$. By \textbf{(C-S)}, there exist $A_q \in [H]^{\leq \kappa}$ and a function $f_q : A_q \rightarrow \kappa$ such that $|H \backslash h(\bigcup_{x \in A_q}V(x,f_q(x)))| < 2^\kappa$ and $q \not\in h(\bigcup_{x \in A_q}V(x,f_q(x)))$. Thus $h(\bigcup_{x\in A_q}V(x,f_q(x)))\subseteq X\backslash\{q\}$. Also, there exists $\alpha_q<\kappa^+$ such that $A_q\in\left[\bigcup_{\beta<\alpha_q} H_\beta\right]^{\leq\kappa}$.\\

\noindent Now unfix $q\in X\backslash H$. Since $|X\backslash H|=|X|>2^\kappa\geq\left|[H]^{\leq\kappa}\right|$,  there exists $p\in X\backslash H$ and $Y\subseteq X\backslash H$ such that $|Y|=|X\backslash H|=|X|$ and $A_p=A_q$ for all $q\in Y$.   Since $|Y| > 2^{\kappa}$ there is $Z \subseteq Y$ such that $|Z| = |Y| > 2^{\kappa}$ and $f_r= f_q$ for all $r, q \in Z$. \\ 

\noindent Thus, for $r \in Z$,  $h(\bigcup_{x\in A_r}V(x,f_r(x)))\subseteq X\backslash\{q\}$ for all $q \in Z$.  That is,  $h(\bigcup_{x\in A_r}V(x,f_r(x)))\subseteq X\backslash Z$ or $Z \subseteq X \backslash h(\bigcup_{x\in A_r}V(x,f_r(x)))$. This implies $|X\backslash h(\Un_{x\in A_r}V(x,f_r(x)))|>2^k$ and thus
$$\left|N(X\backslash h(\Un_{x\in A_r}V(x,f_r(x)))\right|=2^k.$$

\noindent Since $A_r\subseteq\left[\bigcup_{\beta<\alpha_r} H_\beta\right]^{\leq\kappa}$, it follows that
$$N(X\backslash h(\bigcup_{x\in A_r}V(x,f_r(x))) \subseteq H_{\alpha_r +1}\backslash h(\bigcup_{x\in A_r}V(x,f_r(x)))\subseteq H\backslash h(\bigcup_{x\in A_r}V(x,f_r(x))).$$\\

\noindent Therefore,
$$2^\kappa=\left|N(X\backslash h(\bigcup_{x\in A_r}V(x,f_r(x)))\right|
\leq\left|H\backslash h(\bigcup_{x\in A_r}V(x,f_r(x))))\right|
<2^\kappa.$$
This is a contradiction. Thus, $|X|\leq 2^\kappa$.
\end{proof}

We see that in the above proof, like a traditional closing-off argument, a chain of sets $\{H_\alpha:\alpha<\kappa^+\}$ is inductively constructed and the union $H=\Un_{\beta<\alpha}H_\beta$ is formed where $|H|\leq 2^\kappa$. But the above proof fundamentally diverges from standard arguments in the way it is shown that $|X|\leq 2^\kappa$ once the construction of $\{H_\alpha:\alpha<\kappa^+\}$ is complete. In a standard argument it is usually shown that $X=H$, and thus $|X|=|H|\leq 2^\kappa$. This is done by supposing there exists a point $q\notin H$ and obtaining a contradiction. Yet, what ultimately needs to be shown is that $|X|=|H|$, not necessarily that $X=H$, and in the above proof only $|X|=|H|$ is shown. This is accomplished by supposing that $|X|>|H|$, using every point $q\notin H$ rather than just one, and obtaining a contradiction. A Ramsey-theoretic relation is used to obtain the homogeneous set $Z$. 

Furthermore, in a traditional argument the chain of sets $\{H_\alpha:\alpha<\kappa\}$ is constructed whereby at stage $\alpha$ a certain collection of points is added to $\Un_{\beta<\alpha}H_\beta$. In the above proof however a certain collection of subsets of size $2^\kappa$ as well as points is added to $\Un_{\beta<\alpha}H_\beta$. 

In this paper, the function $h$ will be the identity function or the operator $c$.
\vskip 5mm 
\section{The skew-$\lambda$ Lindel\"of degree.}

\begin{definition}
Let $X$ be a space, $A\sse X$, and $\lambda$ an infinite cardinal. The \emph{skew-$\lambda$ Lindel\"of degree of $A$ in $X$}, denoted by $skL(A,X,\lambda)$, is the least infinite cardinal $\kappa$ such that for every cover $\scr{U}$ of $A$ by open sets in $X$ there exists $\scr{V}\in[\scr{U}]^{\leq\kappa}$ such that $\left|A\backslash\bigcup\scr{V}\right|<\lambda$. The \emph{skew-$\lambda$ Lindel\"of degree of $X$}, denoted by $skL(X,\lambda)$, is $skL(X,X, \lambda)$. 
\end{definition}

We note that $skL(X,\lambda)\leq L(X)$ for any space $X$ and infinite cardinal $\lambda$. Also, as $skL(X, \lambda)$ and $\lambda$ are both infinite, it follows that $skL(X, \aleph_0) = skL(X, \aleph_1)$. Proposition~\ref{cldhered}(a) demonstrates that the cardinal function $skL(X,\lambda)$ is hereditary on closed subsets for every cardinal $\lambda$, and \ref{cldhered}(b) and is decreasing  in terms of $\lambda$.  Observe that \ref{cldhered}(e) is an improvement of Proposition~\ref{prop}(a). Proposition~\ref{cldhered}(d) gives a straightforward example of a space $X$ such that $skL(X,\lambda) < L(X)$ for a particular cardinal $\lambda$. For an infinite discrete space $X$, $skL(X, \lambda) = \omega$ if $|X| < \lambda^+$ and $skL(X, \lambda) = |X|$ if $|X| \geq \lambda^+$. 


\begin{proposition}\label{cldhered}
 Let $X$ be a space, $C\sse X$ a closed subset of $X$, and $\lambda$ an infinite cardinal. Then 
 \begin{itemize}
\item[(a)] $skL(C,\lambda)\leq skL(C,X,\lambda)\leq skL(X,\lambda)$,
\item[(b)] if $\mu \leq \lambda$, then $skL(X,\lambda) \leq skL(X, \mu)$,  
\item[(c)] $skL(X, \omega)  = L(X) \geq skLX,\lambda) \geq skL(X,|X|^+) = \omega$,
\item[(d)] if $Y$ is discrete space such that $|Y| = \frak c$, then \newline $skL(Y \cupdot  \beta\omega, \frak c^+) = \omega < \frak c = L(Y \cupdot  \beta\omega)$, and
\item[(e)] $\psi_c(X) \leq \psi(X)\cdot skL(X, \psi(X)^+)$.
 \end{itemize}
\end{proposition}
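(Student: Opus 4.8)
The plan is to adapt the closing argument behind Proposition~\ref{prop}(a) (the bound $\psi_c(X)\le\psi(X)\cdot L(X)$), replacing $L(X)$ by $skL(X,\psi(X)^+)$ and then absorbing the small set of points that the skew-$\lambda$ Lindel\"of degree fails to cover. Put $\kappa=\psi(X)\cdot skL(X,\psi(X)^+)$, so that $\psi(X)\le\kappa$ and $skL(X,\psi(X)^+)\le\kappa$. Fix $x\in X$ and, using $\psi(X)$, fix open neighborhoods $\{U_\alpha:\alpha<\psi(X)\}$ of $x$ with $\bigcap_{\alpha<\psi(X)}U_\alpha=\{x\}$.

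For each $\alpha<\psi(X)$ and each $y\in X\setminus U_\alpha$, I would use the Hausdorff property to choose disjoint open sets $V(y,\alpha)\ni x$ and $W(y,\alpha)\ni y$; since $W(y,\alpha)$ is open and misses $V(y,\alpha)$ we get $\closure{V(y,\alpha)}\cap W(y,\alpha)=\es$, so in particular $y\notin\closure{V(y,\alpha)}$. Now $\{U_\alpha\}\cup\{W(y,\alpha):y\in X\setminus U_\alpha\}$ is an open cover of $X$, so the definition of $skL(X,\psi(X)^+)$ yields a subfamily of size at most $skL(X,\psi(X)^+)$ whose union misses fewer than $\psi(X)^+$ points; let $A_\alpha=\{y\in X\setminus U_\alpha:W(y,\alpha)\text{ belongs to that subfamily}\}$, so $|A_\alpha|\le skL(X,\psi(X)^+)$ and $E_\alpha:=X\setminus\bigl(U_\alpha\cup\bigcup_{y\in A_\alpha}W(y,\alpha)\bigr)$ has $|E_\alpha|\le\psi(X)$.

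Set $B=\bigcup_{\alpha<\psi(X)}E_\alpha$; then $|B|\le\psi(X)$. For each $b\in B\setminus\{x\}$ use the Hausdorff property once more to pick an open $G_b\ni x$ with $b\notin\closure{G_b}$, and consider $\mathcal F=\{V(y,\alpha):\alpha<\psi(X),\,y\in A_\alpha\}\cup\{G_b:b\in B\setminus\{x\}\}$, a family of open neighborhoods of $x$ of cardinality at most $\kappa$. The verification that $\bigcap_{F\in\mathcal F}\closure{F}=\{x\}$ runs as follows: $x$ lies in every $\closure{F}$; and if $z\ne x$, then either $z\in B$, whence $z\notin\closure{G_z}$, or $z\notin B$, in which case one picks $\alpha$ with $z\notin U_\alpha$ (possible since $\bigcap_\alpha U_\alpha=\{x\}$), observes $z\notin E_\alpha$, concludes $z\in W(y,\alpha)$ for some $y\in A_\alpha$, and hence $z\notin\closure{V(y,\alpha)}$. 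As $x$ was arbitrary, this gives $\psi_c(X)\le\kappa$.

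I expect the only genuinely new point — the main obstacle compared with the classical argument for $\psi_c\le\psi\cdot L$ — to be the bookkeeping around the uncovered remainder: one must notice that $skL(X,\psi(X)^+)$ leaves at most $\psi(X)$ points uncovered at each stage $\alpha$, that the union $B$ of these $\psi(X)$-many remainders is still of size at most $\psi(X)$, and that the extra $\psi(X)$-many Hausdorff separations needed to handle $B$ do not push the size of $\mathcal F$ past $\kappa$. Everything else is the standard closing-off verification.
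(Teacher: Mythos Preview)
Your argument for part~(e) is correct and follows essentially the same route as the paper's: both separate each $y\notin U_\alpha$ from $x$ by Hausdorffness, use the skew-$\psi(X)^+$ Lindel\"of degree to cut the family of separating sets down to $\kappa$ many, and then absorb the $\le\psi(X)$ leftover points by adding $\le\psi(X)$ further Hausdorff separations. The only cosmetic differences are that the paper applies $skL$ to the closed set $X\setminus U_\alpha$ (via part~(a)) rather than to all of $X$, and handles the leftover points per~$\alpha$ by reusing the already-chosen $V(y,\alpha)$ instead of aggregating them into a single set $B$ and choosing fresh $G_b$'s; neither change affects the logic or the final bound.
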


\begin{proof}  The proof of the first inequality of (a) is straightforward.
For the  second inequality of  (a), let $\kappa=skL(X,\lambda)$ and $\scr{U}$ be a cover of $C$ by sets open in $X$. Then $\{X\minus C\}\un\scr{U}$ is an open cover of $X$. There exists $\scr{V}\sse\{X\minus C\}\Un\scr{U}$ such that $|\scr{V}|\leq\kappa$ and $|X\minus\Un\scr{V}|<\lambda$. Let $\scr{W}=\scr{V}\minus\{X\minus V\}$. Then $\scr{W}\sse\scr{U}$, $|\scr{W}|\leq\kappa$, and 
$$\left|C\minus\Un\scr{W}\right|=\left|C\minus\Un\scr{V}\right|\leq\left|X\minus\Un\scr{V}\right| <\lambda.$$
Thus, $skL(C,X,\lambda)\leq\kappa$.  The proofs of (b), (c), and (d) are straightforward. For (e), let  $p \in X$, there is a family $\mathcal B \subseteq  \tau(X)$ such that $\cap \mathcal B = \{p\}$ and $|\mathcal B| \leq \psi(X)$. For each $U \in \mathcal B$ and $q \in X \backslash U$, there is an open set $V_q$ such that $x \in V_q \subseteq U$  and $q \notin clV_q$.  As $skL(X\backslash U,\psi(X)^+) \leq skL(X,\psi(X)^+)$, there is a subfamily $\mathcal C \subseteq \mathcal B$ such that $|\mathcal C| \leq skL(X,\psi(X)^+)$ and $|(X\backslash U)\backslash \cup \cup \mathcal C| \leq \psi(X)$.  Let  $\mathcal D_U = \{V_a:a \in (X\backslash U)\backslash \cup \cup \mathcal C\} \cup \mathcal C$.  Then $\cap \{clV: V \in \mathcal D_U\} \subseteq X\backslash U$ and $|\mathcal D_U| \leq \psi(X)\cdot skL(X, \psi(X)^+)$.  Now $\mathcal D = \cup\{\mathcal D_U: U \in \mathcal B\}$ has the property that  $|\mathcal D| \leq  \psi(X)\cdot skL(X, \psi(X)^+) $ and $\cap\{clV: V \in \mathcal D\} = \{p\}$.  This completes the proof that $\psi_c(X) \leq \psi(X)\cdot skL(X, \psi(X)^+)$.
\end{proof}

We now present another example of a space $X$ such that $skL(X,\lambda) < L(X)$.  This example has no isolated points.

\begin{example}\label{exampleA} \rm{
We construct an example of a space $X$ such that $|X| = 2^{\frak c}$, $skL(X, {\frak c}^+) = \omega$ and  $L(X) = \frak{c}$.
Let $\tau$ be the usual topology on $ \omega^* = \beta \omega \backslash \omega$ and ${\mathcal B} = \{B_{\alpha}: \alpha < \frak{c}\}$ a base for $\omega^*$.  For $\alpha < \frak c$, we can inductively find sets $C_{\alpha}$ such that $C_{\alpha} \subseteq B_{\alpha}$, $|C_{\alpha}| = \frak c$,  and $\{C_{\alpha}: \alpha < \frak c\}$ is a pairwise disjoint family. There is a subset $E \subset \frak c$  such that $\{B_{\alpha}: \alpha \in E\}$ is a family of pairwise disjoint sets and $|E| = \frak c$ (find an almost disjoint family $\mathcal F \in [\omega]^{\omega}$ with $|\mathcal F| = \frak c$ and note that $\{cl_{\beta \omega}F\backslash \omega:F \in \mathcal F\}$ works).
Let $D =\omega^* \backslash  \cup\{C_{\alpha}: \alpha \in E\}$.  As $|D| = \frak c$ and $|B_{\alpha}| = 2^{\frak c}$ for each $\alpha < \frak c$, $D$ is dense in $\omega^*$.  Let $X$ be $\omega^*$ with the topology generated by $\tau(\omega^*) \cup \{D\}$.} The space $X$ is not compact but is H-closed.
To show that  ${skL(X, {\frak c}^+) = \omega}$, let   $\mathcal C$ be open cover of $X$.  We can assume that $\mathcal C \subseteq \mathcal B$ and $\mathcal C = \{B_{\alpha}: \alpha \in A\} \cup \{B_{\alpha} \cap D: \alpha \in B\}$ where $A \cup B \subseteq \frak c$.  As $\omega^*$ is compact, there are finite subsets $F_A \in [A]^{<\omega}$ and  $F_B \in [B]^{<\omega}$ such that $X = \cup \{B_{\alpha}: \alpha \in F_A \cup F_B\}$. It follows that $|X\backslash( \cup \{B_{\alpha}: \alpha \in F_A\} \cup \{B_\alpha \cap D: \alpha \in F_B\}| \leq |D| = \frak c$.  This complete the proof that $skL(X, {\frak c}^+) = \omega$.  
As $X$ has a base of size $\frak c$, 
$L(X) \leq \frak{c}$. The open cover $\{B_{\alpha}:\alpha \in E\} \cup \{D\}$ of $X$ has no proper subcover. It follows that $L(X) \geq \frak c$.  Hence $L(X) = \frak c$. \qed
\end{example}

To apply the Main Theorem, we need, for $A \subseteq X$, that $|clA| \leq |A|^\kappa$ where $\kappa \geq \psi_c(X)$ (cf. 1.1(e)).  Usually, this is a consequence of $\psi_c(X)\leq L(X)\psi(X)$ of 1.1(a).
 By \ref{cldhered}(b,e), we know that $\psi_c(X)\leq skL(X,\lambda)\psi(X)$ for a cardinal $\lambda \leq \psi(X)^+$.  However, for an arbitrary $\lambda$, $\psi_c(X)\leq skL(X,\lambda)\psi(X)$ is not necessarily true.  For example, using the Kat\v etov extension $\kappa \omega$  space $\omega$, we have that $skL(\kappa\omega, (2^{\frak c})^+) = \omega$  and $\psi(\kappa\omega) =  \omega$ and $\psi_c(\kappa\omega) = \frak c$, as noted in \S1.  We will use the following lemma to skirt around this obstacle.

\begin{lemma}\label{lemmaA}
Let $X$ be a space and $\lambda$ an infinite cardinal. Define $\kappa=skL(X,2^{\psi(X)\cdot\lambda})\psi(X)\cdot\lambda$. Then for all $x\in X$ there exists a family $\scr{V}$ of open sets such that $|\scr{V}|\leq\kappa$, $x\in\Meet\scr{V}$, and $\left|\Meet_{V\in\scr{V}}clV\right|\leq 2^\kappa$.
\end{lemma}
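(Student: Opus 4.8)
The plan is to produce the family $\scr{V}$ by first handling a single basic neighborhood and then amalgamating over a pseudobase at $x$, in the spirit of the proof of \ref{cldhered}(e), while keeping careful track of cardinalities and using \ref{prop}(e) to control $\theta$- or $\kappa$-closures. Write $\mu=\psi(X)\cdot\lambda$ and $\kappa=skL(X,2^\mu)\cdot\mu$, so that $\kappa\geq\psi(X)$, $\kappa\geq\lambda$, and $2^\kappa\geq 2^\mu$. Fix $x\in X$. Since $\psi(X)\leq\kappa$, choose a family $\scr{B}\sse\tau(X)$ with $\Meet\scr{B}=\{x\}$ and $|\scr{B}|\leq\psi(X)\leq\kappa$. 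For each $U\in\scr{B}$ and each $q\in X\minus U$ pick, using Hausdorffness, an open $V_q^U$ with $x\in V_q^U\sse U$ and $q\notin clV_q^U$; thus $\Meet_{q\in X\minus U}clV_q^U\sse U$ (indeed the intersection, together with $\{x\}$-type considerations, excludes every point of $X\minus U$). The collection $\{V_q^U:q\in X\minus U\}\cup\{U\}$ is an open cover of... well, more precisely it is an open family whose closures intersect inside $U$; to invoke $skL$ we instead cover the closed set $X\minus U$.

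Next I apply the skew-Lindel\"of hypothesis to the closed set $C_U=X\minus U$. By \ref{cldhered}(a), $skL(C_U,X,2^\mu)\leq skL(X,2^\mu)\leq\kappa$. Applying this to the open cover $\{V_q^U\cap(X\minus\{x\}\ \text{-type})\ :q\in C_U\}$ — more carefully, to the cover $\{X\minus clV_q^U : q\in C_U\}$ of $C_U$... Let me instead mirror \ref{cldhered}(e) exactly: the sets $\{V_q^U:q\in C_U\}$ need not cover anything, so consider instead that for fixed $U$ the open cover of $C_U$ by the sets $X\setminus clV_q^U$ (each such set is open and contains $q$) admits, by $skL(C_U,X,2^\mu)\leq\kappa$, a subfamily indexed by $S_U\in[C_U]^{\leq\kappa}$ with $|C_U\minus\Un_{q\in S_U}(X\minus clV_q^U)|<2^\mu$; that leftover set is exactly $E_U=C_U\cap\Meet_{q\in S_U}clV_q^U$, which has size $<2^\mu\leq 2^\kappa$. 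Set $\scr{V}_U=\{V_q^U:q\in S_U\}$; then $|\scr{V}_U|\leq\kappa$ and $\Meet_{V\in\scr{V}_U}clV\sse U\cup E_U$, with $|E_U|<2^\mu$. For each $y\in E_U$, since $\psi(X)\leq\kappa$, choose an open family $\scr{B}_y$ with $\Meet\scr{B}_y=\{y\}$ and $|\scr{B}_y|\leq\kappa$, and — using Hausdorffness again to separate $x$ from $y$ — pick $W_y\in\tau(X)$ with $x\in W_y$ and $y\notin clW_y$. Replacing $\scr{V}_U$ by $\scr{V}_U'=\scr{V}_U\cup\{W_y:y\in E_U\}$ gives $x\in\Meet\scr{V}_U'$, $|\scr{V}_U'|\leq\kappa\cdot 2^\mu=2^\mu\leq 2^\kappa$... which is too big. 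Here is the fix: we do not need $|\scr{V}_U|\leq\kappa$ with the $E_U$ points killed individually; instead note $|E_U|<2^\mu$ and we are allowed $|\Meet_{V}clV|\leq 2^\kappa$, so we may simply keep $\scr{V}_U$ of size $\leq\kappa$ and accept the exceptional set. Finally put $\scr{V}=\Un\{\scr{V}_U:U\in\scr{B}\}$; then $|\scr{V}|\leq|\scr{B}|\cdot\kappa\leq\kappa$, $x\in\Meet\scr{V}$, and
$$\Meet_{V\in\scr{V}}clV\ \sse\ \Meet_{U\in\scr{B}}\Big(U\cup E_U\Big).$$

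It remains to bound the right-hand side by $2^\kappa$, and this is the step I expect to be the main obstacle, since $\Meet\scr{B}=\{x\}$ but $\Meet_U(U\cup E_U)$ can a priori be larger. The idea is: a point $z\neq x$ in this intersection must, for every $U\in\scr{B}$ with $z\notin U$, lie in $E_U$; and since $\Meet\scr{B}=\{x\}$ there is at least one such $U$. So $\Meet_{V}clV\sse\{x\}\cup\Un_{U\in\scr{B}}E_U$, a set of size at most $1+\psi(X)\cdot 2^\mu=2^\mu\leq 2^\kappa$. That completes the argument. The only genuinely delicate points are: (i) verifying that the family of open sets used to cover each closed $C_U$ really is an open cover of $C_U$ (it is, since for $q\in C_U$ we have $q\in X\minus clV_q^U$ because $q\notin clV_q^U$ by construction), so that $skL(C_U,X,2^\mu)$ applies and produces an exceptional set of size $<2^\mu$; and (ii) confirming the arithmetic $skL(X,2^\mu)\cdot\psi(X)\cdot\lambda=\kappa$ absorbs all the index sets, so that $|\scr{V}|\leq\kappa$ and the final exceptional union has size $\leq 2^\mu\leq 2^\kappa$. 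I would close by remarking that no separation stronger than Hausdorff is needed to pick the $V_q^U$, and that this lemma is precisely what lets the Main Theorem be applied with $c$ the $\kappa$-closure operator even when $\psi_c(X)$ is not bounded by $skL(X,\lambda)\psi(X)$.
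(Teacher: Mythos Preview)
Your argument is correct and follows essentially the same route as the paper's proof: fix a pseudobase $\scr{B}$ at $x$; for each $U\in\scr{B}$ use Hausdorffness to produce open sets $V_q^U\ni x$ with $q\notin clV_q^U$; cover the closed set $X\setminus U$ by the open sets $X\setminus clV_q^U$ and apply $skL(X\setminus U,X,2^\mu)\leq\kappa$ to extract $S_U$ of size $\leq\kappa$ with leftover $E_U$ of size $<2^\mu$; set $\scr{V}=\bigcup_U\scr{V}_U$ and observe that $\Meet_{V\in\scr{V}}clV\subseteq\{x\}\cup\Un_{U\in\scr{B}}E_U$, which has size at most $2^\mu\leq 2^\kappa$. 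The exploratory detour about killing the points of $E_U$ individually is unnecessary and you were right to abandon it. One small inaccuracy in your closing remark: in the paper's application of the Main Theorem (\ref{thmA}) the operator $c$ is the ordinary closure $cl$, not the $\kappa$-closure; this lemma feeds into \ref{lemmaD} via \ref{lemmaB} and \ref{lemmaC} to guarantee $|clA|\leq|A|^\kappa$, which is what the Main Theorem needs.
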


\begin{proof}
Let $x\in X$. As $\psi(X)\leq\kappa$, there exists a collection $\scr{U}$ of open sets such that $|\scr{U}|\leq\kappa$ and $\{x\}=\Meet\scr{U}$. As $X$ is Hausdorff, for all $U\in\scr{U}$ and for all $y\in X\minus U$, there exists an open set $V(y,U)$ containing $x$ such that $y\in X\minus cl(V(y,U))$. Thus for all $U\in\scr{U}$, $\{X\minus cl(V(y,U)):y\in X\minus U\}$ is an open cover of $X\minus U$.

Fix $U\in\scr{U}$. Since $X\minus U$ is closed, by \ref{cldhered}(a) we have
$$skL(X\minus U,X,2^{\psi(X)\cdot\lambda})\leq skL(X,2^{\psi(X)\cdot\lambda})\leq\kappa.$$
There exists $A_U\sse[X\minus U]^{\leq\kappa}$ such that
$$\left|(X\minus U)\minus\Un_{y\in A_U}(X\minus cl(V(y,U))\right|=\left|(X\minus U)\meet\Meet_{y\in A_U}cl(V(y,U))\right|<2^{\psi(X)\cdot\lambda}\leq 2^\kappa.$$
Thus,
$$\left|\Un_{U\in\scr{U}}\left((X\minus U)\meet\Meet_{y\in A_U}cl(V(y,U))\right)\right|\leq2^\kappa|\scr{U}|\leq 2^\kappa\cdot\kappa=2^\kappa.$$
Let $Z=\Un_{U\in\scr{U}}\left((X\minus U)\meet\Meet_{y\in A_U}cl(V(y,U))\right)$. Then $|Z|\leq 2^\kappa$. 

Let $\scr{V}=\{V(y,U):U\in\scr{U}, y\in A_U\}$. We show now that $\left(\Meet_{V\in\scr{V}}clV\right)\minus\{x\}\sse Z$. Let $w\in\left(\Meet_{V\in\scr{V}}clV\right)\minus\{x\}$.
As $w\neq x$ and $\{x\}=\Meet\scr{U}$, there exists $W\in\scr{U}$ such that $w\in X\minus W$. Furthermore, we have that $w\in\Meet_{y\in A_W}cl(V(y,W))$ and so
$$w\in X\minus W\meet\Meet_{y\in A_W}cl(V(y,W))\sse Z.$$
This shows that $\left(\Meet_{V\in\scr{V}}clV\right)\minus\{x\}\sse Z$ and thus $\left|\left(\Meet_{V\in\scr{V}}clV\right)\minus\{x\}\right|\leq 2^\kappa$. It follows that $\left|\Meet_{V\in\scr{V}}clV\right|\leq 2^\kappa$.
As $x\in V$ for all $V\in\scr{V}$ and $|\scr{V}|\leq |\scr{U}|\cdot\kappa\leq\kappa\cdot\kappa=\kappa$, the proof is complete.
\end{proof}

A $\theta$-\emph{network} for a space $X$ was defined in \cite{CPR2012} as a non-empty collection of subsets $\scr{N}$ of $X$ such that for every point $x$ in an open set $U$ there exists $N\in\scr{N}$ such that $x\in N\sse clU$. The $\theta$-\emph{network weight} of $X$, denoted by $nw_\theta(X)$, is the least cardinality of a $\theta$-network for $X$. It is straightforward to see that if $X$ is regular then $nw_\theta(X)=nw(X)$.

\begin{lemma}\label{lemmaB}
Let $X$ be a space, $\lambda$ an infinite cardinal, and define\\ $\kappa=skL(X,2^{\psi(X)\cdot\lambda})\psi(X)\cdot\lambda$. Then $|X|\leq nw_\theta(X)^\kappa$.
\end{lemma}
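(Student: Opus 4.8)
The plan is to bound $|X|$ by producing a "small" $\theta$-network-indexed family of closed sets whose intersections isolate points, and then count. Let $\scr{N}$ be a $\theta$-network for $X$ with $|\scr{N}| = nw_\theta(X) =: \mu$, and set $\kappa = skL(X, 2^{\psi(X)\cdot\lambda})\psi(X)\cdot\lambda$ as in the statement. First I would invoke Lemma~\ref{lemmaA}: for each $x \in X$ there is a family $\scr{V}_x$ of open sets with $|\scr{V}_x| \leq \kappa$, $x \in \Meet\scr{V}_x$, and $\left|\Meet_{V \in \scr{V}_x} clV\right| \leq 2^\kappa$. For each $V \in \scr{V}_x$ and each $y \in \Meet_{V\in\scr{V}_x}clV$ with $y \neq x$, use the definition of the $\theta$-network to choose $N(x,V) \in \scr{N}$ with $x \in N(x,V) \subseteq clV$; the key point is that $\bigcap_{V\in\scr{V}_x} N(x,V) \subseteq \Meet_{V\in\scr{V}_x} clV$, a set of size at most $2^\kappa$.

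Next I would set up a closing-off / counting argument. To each $x$ associate the pair consisting of the function $V \mapsto N(x,V)$ — equivalently a subset $\scr{N}_x \in [\scr{N}]^{\leq\kappa}$ together with, for $x' \in \Meet_{N\in\scr{N}_x}N$, enough extra data to pin $x$ down inside that small set. Concretely: the map $x \mapsto \scr{N}_x$ takes at most $\mu^\kappa$ values, and for a fixed value $\scr{N}_0$, all $x$ with $\scr{N}_x = \scr{N}_0$ lie in $\Meet\scr{N}_0$, which has cardinality at most $2^\kappa$ (since $\Meet\scr{N}_0 \subseteq \Meet_{V\in\scr{V}_x}clV$ for any such $x$, using that $N(x,V)\subseteq clV$). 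Hence the fibers of $x\mapsto\scr{N}_x$ have size $\leq 2^\kappa \leq \mu^\kappa$ (as $\mu \geq \omega$ and $\kappa\geq\omega$, $2^\kappa \leq \mu^\kappa$), giving $|X| \leq \mu^\kappa \cdot \mu^\kappa = \mu^\kappa = nw_\theta(X)^\kappa$. One should double-check the degenerate cases where $\Meet_{V\in\scr{V}_x}clV = \{x\}$, in which the fiber is already a singleton, so the bound is only easier.

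Alternatively, and perhaps more in the spirit of the paper, one could deploy the Main Theorem directly with $c = $ the $\kappa$-closure-type operator induced by $\scr{N}$ (sending $B \in [X]^{\leq\kappa}$ to $\bigcup\{ N \in \scr{N} : N \cap B \neq \varnothing \text{ appropriately}\}$ suitably capped), using the $V(x,\alpha)$ as an enumeration of $\scr{V}_x$ and $h$ the identity; verifying the cover-separation condition \textbf{(C-S)} would rest on the fact that once a set $H$ of size $\leq 2^\kappa$ is $\theta$-network-closed and $q \notin H$, the family $\scr{V}_q$ separates $q$ from a co-$2^\kappa$ piece of $H$. That said, the direct counting argument above is cleaner for this lemma, and I would present that.

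The main obstacle I anticipate is making precise the passage "$\Meet\scr{N}_0 \subseteq \Meet_{V\in\scr{V}_x}clV$" uniformly: the set $\scr{V}_x$ depends on $x$, so one must be careful that the fiber-bounding set really does have size $\leq 2^\kappa$ independently of which $x$ in the fiber one picks — this works because for \emph{any} $x$ in the fiber, $\scr{N}_x = \scr{N}_0$ and each $N \in \scr{N}_0$ equals some $N(x,V)$ with $V \in \scr{V}_x$ and $N(x,V) \subseteq clV$, so $\Meet\scr{N}_0 \subseteq \Meet_{V\in\scr{V}_x}clV$, which has size $\leq 2^\kappa$ by Lemma~\ref{lemmaA}. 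A secondary subtlety is handling whether $\scr{N}_x$ alone suffices or whether one also needs to remember the pairing $V\mapsto N(x,V)$; since only the intersection $\Meet\scr{N}_x$ matters for the bound, recording $\scr{N}_x$ as a subset of $\scr{N}$ is enough, and its cardinality is $\leq |\scr{V}_x| \leq \kappa$.
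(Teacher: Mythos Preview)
Your proposal is correct and follows essentially the same approach as the paper: invoke \ref{lemmaA} to get $\scr{V}_x$ with $|\Meet_{V\in\scr{V}_x}clV|\leq 2^\kappa$, choose $N(x,V)\in\scr{N}$ with $x\in N(x,V)\sse clV$, and then count by observing that the map $x\mapsto\{N(x,V):V\in\scr{V}_x\}$ has at most $nw_\theta(X)^\kappa$ values and fibers of size at most $2^\kappa$. The paper phrases the last step slightly differently---it forms $\scr{C}=\{\Meet\scr{M}:\scr{M}\in[\scr{N}]^{\leq\kappa}\}$, sets $C_x=\Meet_\alpha N(x,\alpha)\in\scr{C}$, and writes $X=\Un_x C_x$---but this is exactly your fiber argument in different clothing, and the subtlety you flag (that $\Meet\scr{N}_0\sse\Meet_{V\in\scr{V}_x}clV$ for \emph{any} $x$ in the fiber) is handled the same way. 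Your aside about the Main Theorem route is unnecessary here, as you note; the stray clause ``and each $y\in\Meet_{V\in\scr{V}_x}clV$ with $y\neq x$'' in your first paragraph is unused and should be deleted.
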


\begin{proof}
By \ref{lemmaA}, for all $x\in X$ there exists a family of open sets $\scr{V}_x=\{V(x,\alpha):\alpha<\kappa$ such that $x\in\Meet\scr{V}_x$ and $\left|\Meet _{\alpha<\kappa}cl(V(x,\alpha))\right|\leq 2^\kappa$.

Let $\scr{N}$ be a $\theta$-network for $X$ such that $|\scr{N}|=nw_\theta(X)$. For all $x\in X$ and $\alpha<\kappa$, there exists $N(x,\alpha)\in\scr{N}$ such that $x\in N(x,\alpha)\sse cl(V(x,\alpha))$. Thus, for all $x\in X$, 
$$x\in\Meet_{\alpha<\kappa}N(x,\alpha)\sse\Meet _{\alpha<\kappa}cl(V(x,\alpha)).$$
Define 
$$\scr{C}=\left\{\Meet_{N\in\scr{M}}N:\scr{M}\in[\scr{N}]^{\leq\kappa}\right\}$$
and observe that $|\scr{C}|\leq nw_\theta(X)^\kappa$. If for each $x\in X$ we set $C_x=\Meet_{\alpha<\kappa}N(x,\alpha)$, we see that $C_x\in\scr{C}$ and $x\in C_x\sse\Meet _{\alpha<\kappa}cl(V(x,\alpha))$. Furthermore, $|C_x|\leq\left|\Meet _{\alpha<\kappa}cl(V(x,\alpha))\right|\leq 2^\kappa$. 

Define $\scr{B}=\{C_x:x\in X\}\sse\scr{C}$ and note that $|\scr{B}|\leq|\scr{C}|\leq nw_\theta(X)^\kappa$. For all $B\in\scr{B}$, choose $x_B\in X$ such that $B=C_{x_B}$. We can re-write $\scr{B}=\{C_{x_B}:B\in\scr{B}$. Furthermore, since $\Un\scr{B}=X$, we see that $X=\Un\{C_{x_B}:B\in\scr{B}$ and thus
$$|X|\leq|\scr{B}|\cdot 2^\kappa\leq nw_\theta(X)^\kappa\cdot 2^\kappa=nw_\theta(X)^\kappa.$$
\end{proof}

\begin{lemma}\label{lemmaC}
Let $X$ be a space and set $\kappa=skL(X,2^{\psi(X)t(X)})\psi(X)t(X)$. Then $|X|\leq d(X)^\kappa$.
\end{lemma}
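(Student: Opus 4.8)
The plan is to derive Lemma~\ref{lemmaC} from Lemma~\ref{lemmaB} by manufacturing a small $\theta$-network out of a dense set. Put $\kappa=skL(X,2^{\psi(X)t(X)})\psi(X)t(X)$, as in the statement. Since $\kappa$ is a product of infinite cardinals one of whose factors is $t(X)$, we have $t(X)\leq\kappa$, and hence by Proposition~\ref{prop}(c) we get $cl_\kappa B=clB$ for every $B\sse X$; this is the only use of the tightness hypothesis.

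Next I would fix a dense set $D\sse X$ with $|D|=d(X)$ and set $\scr{N}=\{clB:B\in[D]^{\leq\kappa}\}$, and check that $\scr{N}$ is a $\theta$-network. Given a point $x$ in an open set $U$, every neighbourhood of $x$ meets the nonempty open set $U$ and hence meets $D$, so $x\in cl(D\meet U)=cl_\kappa(D\meet U)=\Un\{clB:B\in[D\meet U]^{\leq\kappa}\}$; choosing $B$ in this union with $x\in clB$, we have $B\in[D]^{\leq\kappa}$, so $clB\in\scr{N}$, and $x\in clB\sse cl(D\meet U)\sse clU$. Consequently $nw_\theta(X)\leq|\scr{N}|\leq\bigl|[D]^{\leq\kappa}\bigr|\leq d(X)^\kappa$ (the estimate $\bigl|[D]^{\leq\kappa}\bigr|\leq|D|^\kappa$ is the routine one, and the degenerate case of finite $D$ is trivial since then $X$ itself is finite).

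Finally I would apply Lemma~\ref{lemmaB} with $\lambda=t(X)$: this substitution produces exactly the cardinal $\kappa$ above, so $|X|\leq nw_\theta(X)^\kappa\leq\bigl(d(X)^\kappa\bigr)^\kappa=d(X)^{\kappa\cdot\kappa}=d(X)^\kappa$, which is the assertion.

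I do not anticipate a genuine obstacle; the step that needs care is the verification that $\{clB:B\in[D]^{\leq\kappa}\}$ really is a $\theta$-network, since without the tightness bound $t(X)\leq\kappa$ a dense set only yields the possibly oversized $\theta$-network $\{cl(D\meet U):U\in\tau(X)\}$. One should also keep an eye on the elementary arithmetic $2^\kappa\leq d(X)^\kappa$ and $(d(X)^\kappa)^\kappa=d(X)^\kappa$, and confirm that Lemma~\ref{lemmaB} under $\lambda=t(X)$ delivers precisely the $\kappa$ of the present statement. (Alternatively one could argue directly from Lemma~\ref{lemmaA}, replacing each open set $V(x,\alpha)$ by a $\leq\kappa$-sized subset of $D\meet V(x,\alpha)$ whose closure still contains $x$ and then imitating the proof of Lemma~\ref{lemmaB}, but routing through Lemma~\ref{lemmaB} is shorter.)
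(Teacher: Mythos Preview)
Your proof is correct and follows the same route as the paper: apply \ref{lemmaB} with $\lambda=t(X)$ after bounding $nw_\theta(X)$ in terms of $d(X)$. The only difference is that the paper quotes the sharper inequality $nw_\theta(X)\leq d(X)^{t(X)}$ from \cite{CPR2012} (Proposition~3.2 there), whereas you prove the slightly weaker but equally sufficient bound $nw_\theta(X)\leq d(X)^{\kappa}$ from scratch; your construction of the $\theta$-network $\{clB:B\in[D]^{\leq\kappa}\}$ is in fact essentially the argument behind the cited result.
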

\begin{proof}
It was shown in 3.2 in \cite{CPR2012} that $nw_\theta(X)\leq d(X)^{t(X)}$. After letting $\lambda=t(X)$ and using \ref{lemmaB}, we see that
$$|X|\leq nw_\theta(X)^\kappa\leq\left(d(X)^\lambda\right)^\kappa=d(X)^\kappa.$$
\end{proof}

\begin{lemma}\label{lemmaD}
Let $X$ be a space, let $A\sse X$, and set\\ $\kappa=skL(X,2^{\psi(X)t(X)})\psi(X)t(X)$.  Then $|clA|\leq |A|^\kappa$.
\end{lemma}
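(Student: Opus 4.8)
The plan is to run a counting argument parallel to the proof of Lemma~\ref{lemmaB}, but to manufacture the required ``$\theta$-network'' directly out of $A$ (so that $nw_\theta(X)^\kappa$ is replaced by $|A|^\kappa$) and to apply it only to the points of $clA$. Two preliminary remarks set the stage: the cardinal $\kappa$ in the statement is exactly the one produced by Lemma~\ref{lemmaA} with $\lambda=t(X)$, and $t(X)\le\psi(X)t(X)\le\kappa$, so $X$ has tightness at most $\kappa$. We may assume $A$ is infinite, for if $A$ is finite then $clA=A$ (as $X$ is Hausdorff) and the inequality is trivial.

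For each $p\in clA$ I would fix two pieces of data. First, since $t(X)\le\kappa$, choose $B_p\in[A]^{\le\kappa}$ with $p\in clB_p$. Second, by Lemma~\ref{lemmaA} choose a family of open sets, written (with repetitions if necessary) as $\scr V_p=\{V(p,\alpha):\alpha<\kappa\}$, with $p\in\Meet\scr V_p$ and $\bigl|\Meet_{\alpha<\kappa}cl(V(p,\alpha))\bigr|\le 2^\kappa$. The one step that is not pure bookkeeping is the observation that $p\in cl\bigl(B_p\meet V(p,\alpha)\bigr)$ for every $\alpha<\kappa$: indeed, for any open $W\ni p$ the set $W\meet V(p,\alpha)$ is an open neighbourhood of $p$, hence meets $B_p$ (as $p\in clB_p$), so $W$ meets $B_p\meet V(p,\alpha)$. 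Since $B_p\meet V(p,\alpha)\in[A]^{\le\kappa}$ and $cl\bigl(B_p\meet V(p,\alpha)\bigr)\sse cl(V(p,\alpha))$, the set $N(p,\alpha):=cl\bigl(B_p\meet V(p,\alpha)\bigr)$ belongs to the family $\scr N:=\{clC:C\in[A]^{\le\kappa}\}$, and $|\scr N|\le\bigl|[A]^{\le\kappa}\bigr|\le|A|^\kappa$.

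From here I would imitate the proof of Lemma~\ref{lemmaB} almost verbatim. Put $\scr C=\bigl\{\Meet_{N\in\scr M}N:\scr M\in[\scr N]^{\le\kappa}\bigr\}$, so that $|\scr C|\le|\scr N|^\kappa\le(|A|^\kappa)^\kappa=|A|^\kappa$. For $p\in clA$ set $C_p=\Meet_{\alpha<\kappa}N(p,\alpha)$; then $C_p\in\scr C$, we have $p\in C_p$ since $p\in N(p,\alpha)$ for all $\alpha$, and $C_p\sse\Meet_{\alpha<\kappa}cl(V(p,\alpha))$, so $|C_p|\le 2^\kappa$. Consequently $clA\sse\Un\{C_p:p\in clA\}$ exhibits $clA$ as a union of at most $|\scr C|\le|A|^\kappa$ sets, each of size at most $2^\kappa$, whence $|clA|\le|A|^\kappa\cdot 2^\kappa=|A|^\kappa$, using that $|A|\ge 2$ and hence $2^\kappa\le|A|^\kappa$.

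The only real content, and hence the main obstacle, is the neighbourhood-chasing step showing $p\in cl(B_p\meet V(p,\alpha))$: this is precisely what allows a network drawn entirely from $A$ to substitute for the ambient $\theta$-network of Lemma~\ref{lemmaB}. Everything else is cardinal arithmetic, the definition of tightness, and the behaviour of $skL$ on closed subspaces already packaged inside Lemma~\ref{lemmaA}. One should merely keep an eye on the degenerate bookkeeping ($|A|\le 1$, and the identity $|A|^\kappa\cdot 2^\kappa=|A|^\kappa$), but these present no difficulty.
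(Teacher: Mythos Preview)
Your argument is correct. The paper takes a more modular route: it simply notes that $\psi(clA)\le\kappa$, $t(clA)\le\kappa$, $d(clA)\le|A|$, and (via Proposition~\ref{cldhered}(a)) $skL(clA,2^{\psi(X)t(X)})\le\kappa$, and then invokes Lemma~\ref{lemmaC} with $clA$ playing the role of $X$ to obtain $|clA|\le d(clA)^\kappa\le|A|^\kappa$. Your proof instead stays in the ambient space, applies Lemma~\ref{lemmaA} to $X$ itself, and manufactures the network $\{clC:C\in[A]^{\le\kappa}\}$ directly from $A$ via the tightness bound---in effect unrolling the proofs of Lemma~\ref{lemmaB} and of the inequality $nw_\theta\le d^{t}$ from \cite{CPR2012} in one pass. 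The paper's version is shorter and reuses existing machinery; yours is more self-contained and, by invoking Lemma~\ref{lemmaA} in $X$ rather than passing to the subspace $clA$, sidesteps the need to verify that the specific $\kappa$ in the hypothesis of Lemma~\ref{lemmaC} transfers correctly to $clA$.
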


\begin{proof}
First note that $\psi(clA)\leq\kappa$, $t(clA)\leq\kappa$, and $d(clA)\leq|A|$. Furthermore, by \ref{cldhered}(a), we have that 

$$skL(clA, 2^{\psi(X)t(X)})\leq skL(clA, X, 2^{\psi(X)t(X)})\leq skL(X,2^{\psi(X)t(X)})\leq\kappa.$$ Now use \ref{lemmaC} above, where $X$ in \ref{lemmaC} is $clA$.
\end{proof}

We now apply the Main Theorem to obtain a new cardinality bound for any space $X$ (Theorem~\ref{thmA} below). In view of the fact that $skL(X,\lambda)\leq L(X)$ for any space $X$ and cardinal $\lambda$, this result gives an improvement on the well-known \arhangelskii -\sapirovskii~bound $|X| \leq 2^{L(X)\psi(X)t(X)}$. 


\begin{theorem}\label{thmA}
For any space $X$,  
$$|X|\leq 2^{skL(X,2^{\psi(X)t(X)})\psi(X)t(X)}.$$
\end{theorem}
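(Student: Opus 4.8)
The plan is to apply the Main Theorem with $\kappa = skL(X, 2^{\psi(X)t(X)})\psi(X)t(X)$ and with the operator $c$ taken to be the closure map: for $B \in [X]^{\leq\kappa}$ set $c(B) = clB$. Lemma~\ref{lemmaD} guarantees that $|clB| \leq |B|^\kappa \leq 2^\kappa$, so $c$ does indeed map $[X]^{\leq\kappa}$ into $[X]^{\leq 2^\kappa}$, and it is clearly an operator (monotone and expansive). For the family of open sets, I would use Lemma~\ref{lemmaA}: for each $x \in X$ fix a family $\{V(x,\alpha) : \alpha < \kappa\}$ of open sets with $x \in \bigcap_\alpha V(x,\alpha)$ and $\left|\bigcap_\alpha cl(V(x,\alpha))\right| \leq 2^\kappa$. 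The expansive function $h$ in condition \textbf{(E)} will simply be the identity (more precisely $h(\mathcal S) = \bigcup \mathcal S$, which trivially contains $\bigcup\mathcal S$); so throughout, $h(\bigcup_{x\in A} V(x,f(x))) = \bigcup_{x\in A} V(x, f(x))$.

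The main work is verifying the cover-separation condition \textbf{(C-S)}. So suppose $\varnothing \ne H \in [X]^{\leq 2^\kappa}$ with $clB \subseteq H$ for every $B \in [H]^{\leq\kappa}$, i.e. $H$ is $\kappa$-closed; and fix $q \notin H$. I need to produce $A \in [H]^{\leq\kappa}$ and $f : A \to \kappa$ with $q \notin \bigcup_{x\in A} V(x,f(x))$ and $|H \setminus \bigcup_{x\in A} V(x,f(x))| < 2^\kappa$. Since $H$ is $\kappa$-closed and $t(X) \leq \kappa$, Proposition~\ref{prop}(c) gives $clH = cl_\kappa H = H$, so $H$ is closed; in particular $q \notin clH$. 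Now for each $p \in H$, since $q \notin \bigcap_\alpha cl(V(p,\alpha))$ when $q$ avoids a suitable $cl(V(p,\alpha))$ — this is the point where I need to be careful. The sets $V(p,\alpha)$ from Lemma~\ref{lemmaA} have $\bigcap_\alpha cl(V(p,\alpha))$ of size $\leq 2^\kappa$; but to separate $q$ from $p$ I want some individual $\alpha$ with $q \notin cl(V(p,\alpha))$. Re-examining Lemma~\ref{lemmaA}'s construction, the $V(p,\alpha)$'s arise (after re-indexing) as the sets $V(y,U)$ with $y$ ranging over the relevant "leftover" sets, and these were chosen precisely so that for each potential obstruction point one has a $V$ whose closure omits it. So I expect that one can arrange, when $q \in X \setminus H$, that for each $p$ there is $\alpha_p$ with $p \in V(p,\alpha_p)$ and $q \notin cl(V(p,\alpha_p)) \supseteq V(p,\alpha_p)$; set $f(p) = \alpha_p$. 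Then with $A = H$ if $|H| \leq \kappa$, or by a closing-off/elementary-submodel-free counting argument selecting $A \in [H]^{\leq\kappa}$, the union $\bigcup_{p\in A} V(p, f(p))$ covers all of $H$ except possibly a set contained in $\bigcap$ of closures, of size $\leq 2^\kappa$ — wait, that bound is $\leq 2^\kappa$, not $< 2^\kappa$, which is exactly the subtlety the Main Theorem's novel proof is designed to absorb, since \textbf{(C-S)} must give strict inequality.

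The hard part, and the crux of the argument, is therefore the bookkeeping that produces $A$ of size $\leq\kappa$ with $|H \setminus \bigcup_{x\in A} V(x,f(x))| < 2^\kappa$ \emph{strictly}. I would handle this by running an internal closing-off of length $\kappa^+$ inside $H$: build an increasing chain $\{A_\beta : \beta < \kappa^+\}$ of subsets of $H$ of size $\leq\kappa$, at each stage throwing in, for points not yet covered, a point of $H$ whose associated $V$ still omits a witness, using that $skL(H, 2^{\psi(X)t(X)}) \leq \kappa$ (Proposition~\ref{cldhered}(a), $H$ being closed) so that after $\leq\kappa$ choices only $< 2^{\psi(X)t(X)} \leq 2^\kappa$ points of $H$ remain uncovered. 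Applying the definition of $skL$ to the open cover $\{X \setminus \{q\}\}\cup\{V(p,f(p)) : p \in H\}$ of $H$ (legitimate since $q\notin H$ so $X\setminus\{q\}\supseteq H$) — actually cleaner: apply $skL(H, 2^{\psi(X)t(X)})$ directly to the open cover $\{V(p,\alpha_p): p\in H\}$ of $H$ to extract $A \in [H]^{\leq skL} \subseteq [H]^{\leq\kappa}$ with $|H \setminus \bigcup_{p\in A}V(p,\alpha_p)| < 2^{\psi(X)t(X)} \leq 2^\kappa$, getting the strict inequality for free from the definition of the skew-Lindel\"of degree. Then $q \notin V(p,\alpha_p)$ for all $p$, hence $q \notin \bigcup_{p\in A} V(p,f(p))$. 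That establishes \textbf{(C-S)}, so the Main Theorem yields $|X| \leq 2^\kappa = 2^{skL(X,2^{\psi(X)t(X)})\psi(X)t(X)}$, as desired. The one genuine gap to nail down is confirming that Lemma~\ref{lemmaA}'s construction really does furnish, for $q\notin H$, a single-index witness $\alpha_p$ for every $p \in H$ simultaneously — if not, I would strengthen Lemma~\ref{lemmaA} (or reprove it inline) to deliver exactly this, which its proof manifestly supports since the $V(y,U)$ were built Hausdorff-separating $y$ from $x$ one point at a time.
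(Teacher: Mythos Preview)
Your approach is essentially the paper's, but you have taken an unnecessary detour that creates the very gap you then worry about. The paper does \emph{not} use the open sets from Lemma~\ref{lemmaA} for the $V(x,\alpha)$; it simply uses $\psi(X)\leq\kappa$ to fix, for each $x$, a pseudo-base $\{V(x,\alpha):\alpha<\kappa\}$ with $\{x\}=\bigcap_{\alpha<\kappa} V(x,\alpha)$. With that choice, for any $q\neq x$ there is trivially some $\alpha_x$ with $q\notin V(x,\alpha_x)$, so no strengthening of Lemma~\ref{lemmaA} is needed and the ``genuine gap'' you flag simply does not arise. Lemma~\ref{lemmaA} enters the proof only indirectly---via Lemmas~\ref{lemmaB}, \ref{lemmaC}, and \ref{lemmaD}---to guarantee that $cl:[X]^{\leq\kappa}\to[X]^{\leq 2^\kappa}$, which you cite correctly.

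Everything else in your proposal matches the paper: $H$ is closed because $t(X)\leq\kappa$ and Proposition~\ref{prop}(c) give $clH=cl_\kappa H\subseteq H$; then Proposition~\ref{cldhered}(a) yields $skL(H,X,2^{\psi(X)t(X)})\leq\kappa$, and applying this to the cover $\{V(x,\alpha_x):x\in H\}$ of $H$ produces $A\in[H]^{\leq\kappa}$ with $|H\setminus\bigcup_{x\in A}V(x,\alpha_x)|<2^{\psi(X)t(X)}\leq 2^\kappa$ (the strict inequality comes straight from the definition of $skL$, exactly as you eventually note). Your proposed internal $\kappa^+$-length closing-off inside $H$ is, as you yourself realize, unnecessary.
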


\begin{proof}
Let $\kappa=skL(X,2^{\psi(X)t(X)})\psi(X)t(X)$. We use the Main Theorem where the operator $c$ is the closure operator $cl$ and the expansion function $h$ is the identity function. \ref{lemmaD} guarantees that $cl$ is an operator such that $|clA|\leq 2^\kappa$ whenever $A\in[X]^{\leq\kappa}$.  As $\psi(X)\leq\kappa$, for each $x\in X$ there exists a pseudo-base $\{V(x,\alpha):\alpha<\kappa\}$ at $x$.

We need to verify that condition \textbf{C-S} in the Main Theorem holds. Let $\varnothing \ne H \in [X]^{\leq2^{\kappa}}$ be such that $clA \subseteq H$ for all $A \in [H]^{\leq \kappa}$. Observe that the fact that $clA \subseteq H$ for all $A \in [H]^{\leq \kappa}$ implies that $cl_\kappa(H)\sse H$. Also, since $t(X)\leq\kappa$, by \ref{prop}(c) it follows that $cl_\kappa(H)=clH$. Therefore $clH\sse H$ and $H$ is a closed set. By \ref{cldhered}, $skL(H,X,2^{\psi(X)t(X)})\leq\kappa$.

Let $q\in X\minus H$. For all $x\in H$, there exists $\alpha_x<\kappa$ such that $q\in X\minus V(x,\alpha_x)$. Then $\{V(x,\alpha_x):x\in H\}$ is a cover of $H$ by sets open in $X$. As $skL(H,X,2^{\psi(X)t(X)})\leq\kappa$, there exists $A\in[H]^{\leq\kappa}$ such that 
$$\left|H\minus\Un_{x\in A}V(x,\alpha_x)\right|< 2^{\psi(X)t(X)}\leq 2^\kappa.$$
Define $f:A\to\kappa$ by $f(x)=\alpha_x$ and observe $q\notin\Un_{x\in A}V(x,f(x))$. This verifies the condition \textbf{C-S}. 

We conclude by the Main Theorem that $|X|\leq 2^\kappa$.
\end{proof}


\begin{example}\label{E3.9} \rm{  Let $Z$ be a discrete space of size $\aleph_1$.  Using a theorem of Easton\cite{Eas}, we show in a certain model of ZFC that $$2^{skL(Z,2^{\psi(Z)t(Z)})\psi(Z)t(Z)} < 2^{L(Z)\psi(Z)t(Z)}.$$
Note that $L(Z) = {\aleph_1}$, $\psi(Z)t(Z) = \aleph_0$, and $2^{L(Z)\psi(Z)t(Z)} = 2^{\aleph_1}.$  On the other hand, $2^{skL(Z,2^{\psi(Z)t(Z)})\psi(Z)t(Z)} 
= 2^{skL(Z,2^{\aleph_0})}$ and when $|Z| < 2^{\aleph_0}$, by \ref{cldhered}(c), $ 2^{swL(Z,2^{\aleph_0})} = 2^{\aleph_0}$.  By Easton's Theorem, there is a model of ZFC in which $2^{\aleph_0} = \aleph_2$ and  $2^{\aleph_1} = \aleph_3$ are true.  In this model of ZFC, as $|Z| < 2^{\aleph_0}$,  $2^{skL(Z,2^{\psi(Z)t(Z)})\psi(Z)t(Z)}  = 2^{\aleph_0}= \aleph_2$.  However, $2^{L(Z)\psi(Z)t(Z)} = 2^{\aleph_1} = \aleph_3.$ \qed}
\end{example}

Example \ref{E3.9} answers (at least consistently) a question asked by Paul Szeptycki during the 2015 Summer Conference on Topology and its Applications held in Galway, Ireland.

\vskip 3.5mm

Let $X$ be a space and $\lambda$ an infinite cardinal. $X$ is \emph{skew-$\lambda$ Lindel\" of} if $skL(X,\lambda) = \omega$. Thus, $X$ is skew-$\lambda$ Lindel\" of if for every open cover $\scr{U}$ of $X$ there exists $\scr{V}\in[\scr{U}]^{\leq\omega}$ such that $\left|X\minus\Un\scr{V}\right|<\lambda$. The following is an immediate consequence of \ref{thmA}.

\begin{corollary}
If $X$ is a skew-$\lambda$ Lindel\" of space then $|X|\leq2^{t(X)\psi(X)}$, where $\lambda=2^{t(X)\psi(X)}$.
\end{corollary}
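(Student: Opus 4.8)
The plan is to derive the Corollary as a direct specialization of Theorem~\ref{thmA}. Let $X$ be a skew-$\lambda$ Lindel\"of space with $\lambda = 2^{t(X)\psi(X)}$; by definition this means $skL(X,\lambda) = \omega$. First I would observe that Theorem~\ref{thmA} states $|X| \leq 2^{\kappa}$ where $\kappa = skL(X, 2^{\psi(X)t(X)})\,\psi(X)\,t(X)$. The key point is that the internal cardinal appearing in the skew-Lindel\"of parameter, namely $2^{\psi(X)t(X)}$, is exactly the hypothesized $\lambda$, so $skL(X, 2^{\psi(X)t(X)}) = skL(X,\lambda) = \omega$ by the skew-$\lambda$ Lindel\"of assumption.

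Next I would substitute this into the exponent: $\kappa = \omega \cdot \psi(X) \cdot t(X) = \psi(X) t(X)$, using that $\psi(X)$ and $t(X)$ are infinite cardinals and hence absorb the factor $\omega$. Plugging back into the conclusion of Theorem~\ref{thmA} gives $|X| \leq 2^{\psi(X)t(X)} = 2^{t(X)\psi(X)}$, which is the desired bound. One should also note that $\lambda = 2^{t(X)\psi(X)}$ is indeed an infinite cardinal (since $t(X)\psi(X) \geq \omega$), so it is a legitimate parameter for the skew-Lindel\"of degree, and the statement ``$X$ is skew-$\lambda$ Lindel\"of'' is meaningful.

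There is essentially no obstacle here — the Corollary is a one-line consequence obtained by noticing that the cardinal $2^{\psi(X)t(X)}$ built into the statement of Theorem~\ref{thmA} coincides with the $\lambda$ in the hypothesis. The only thing to be careful about is the bookkeeping with infinite-cardinal arithmetic (absorbing $\omega$ into $\psi(X)t(X)$), which is entirely routine. So the proof I would write is simply: apply Theorem~\ref{thmA}, note $skL(X, 2^{\psi(X)t(X)}) = skL(X,\lambda) = \omega$ by hypothesis, conclude $\kappa = \psi(X)t(X)$, and read off $|X| \leq 2^{t(X)\psi(X)}$.
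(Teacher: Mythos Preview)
Your proposal is correct and matches the paper's approach exactly: the paper simply states the Corollary as ``an immediate consequence of \ref{thmA}'' with no further argument, and your derivation spells out precisely that immediate consequence. There is nothing to add or correct.
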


\section{The skew-$\lambda$ almost Lindel\"of degree}

\begin{definition}
Let $X$ be a space, $A\subseteq X$, and $\lambda$ an infinite cardinal. 
\begin{itemize}
\item[(a)] The \emph{skew-$\lambda$ almost Lindel\"of degree of $A$ in $X$}, denoted by $saL(A,X,\lambda)$, is the least infinite cardinal $\kappa$ such that for every cover $\scr{U}$ of $A$ by open sets in $X$ there exists $\scr{V}\in[\scr{U}]^{\leq\kappa}$ such that $\left|A\backslash\bigcup_{V\in\scr{V}}cl V\right|<\lambda$. The \emph{skew-$\lambda$ almost Lindel\"of degree of $X$}, denoted by $saL(X,\lambda)$, is $saL(X,X,\lambda)$. 
\item[(b)] For a cardinal $\kappa$, the \emph{skew-$\lambda$ almost Lindel\"of degree of $X$ with respect to $\kappa-$closed sets}, denoted by $saL_{\kappa}(X,\lambda)$, is defined by
$$saL_{\kappa}(X,\lambda)=\sup\{saL(A,X,\lambda):A\textup{ is $\kappa-$closed}\}.$$ 
\item[(c)] The \emph{skew-$\lambda$ almost Lindel\"of degree of $X$ with respect to closed sets}, denoted by $saL_{c}(X,\lambda)$, is defined by 
$$saL_{c}(X,\lambda)=\sup\{saL(A,X,\lambda):A \hskip 1mm \textup{ is closed}\}.$$
\end{itemize}
\end{definition}

Note that as $saL(X, \lambda)$ and $\lambda$ are both infinite, it follows that $saL(X, \omega) = saL(X, \omega_1)$ and  $saL_c(X, \omega) = saL_c(X, \omega_1)$. Useful relationships between the above cardinal invariants are given in the following proposition. 

\begin{proposition}\label{propA}
 For any space $X$ and cardinals $\lambda,\kappa$,
\begin{itemize}
\item[(a)] $saL(X,\lambda)\leq saL_c(X,\lambda) \leq skL(X,\lambda)\leq L(X)$,
\item[(b)] $saL_c(X,\lambda) \leq saL_{\kappa}(X,\lambda)\leq aL_{\kappa}(X)$,
\item[(c)] $saL(X,\lambda)\leq aL(X)$ and $saL_c(X,\lambda)\leq aL_c(X)$,

\item[(d)]  $saL_c(X, \omega) = aL_c(X) \geq saL_c(sX,\lambda) \geq saL_c(X,|X|^+) = \omega$,
\item[(e)] If $t(X) \leq \kappa$, then $saL_{\kappa}(X,\lambda) = saL_{c}(X,\lambda)$,
\item[(f)] If $saL_{c}(X,\lambda)t(X) \leq \kappa$, then $saL_{\kappa}(X,\lambda) \leq \kappa$,
\item[(g)] If $\lambda\leq\gamma$, then $saL_\kappa(X,\gamma)\leq saL_\kappa(X,\lambda)$, and
\item[(h)] if $Y$ is discrete space such that $|Y| = \frak c$, then \newline $saL(Y \cupdot  \beta\omega, \frak c^+) = \omega < \frak c = aL(Y \cupdot  \beta\omega)$, 
\end{itemize}
\end{proposition}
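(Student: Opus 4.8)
The goal is to prove Proposition~\ref{propA}, a list of basic relationships among the cardinal invariants $saL$, $saL_c$, $saL_\kappa$, $skL$, $aL$, $aL_c$, $aL_\kappa$, $L$, and $t$. Most parts are routine unwinding of definitions; only a couple require a genuine argument. I describe the plan item by item.

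\medskip

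\noindent\textbf{Parts (a), (b), (c).} These are all immediate monotonicity observations. For (a), the first inequality $saL(X,\lambda)\le saL_c(X,\lambda)$ holds because $X$ is itself a closed subset of $X$, so it is one of the sets over which the supremum defining $saL_c(X,\lambda)$ is taken. For $saL_c(X,\lambda)\le skL(X,\lambda)$, I would fix a closed $C\subseteq X$ and an open cover $\scr U$ of $C$ by sets open in $X$; then $\scr U\cup\{X\setminus C\}$ covers $X$, so by $skL(X,\lambda)$ there is a $\scr V\in[\scr U\cup\{X\setminus C\}]^{\le\kappa}$ with $|X\setminus\bigcup\scr V|<\lambda$, and then $\scr W=\scr V\cap\scr U$ satisfies $C\setminus\bigcup\scr W=C\setminus\bigcup\scr V$, hence $|C\setminus\bigcup_{W\in\scr W}W|\le|C\setminus\bigcup_{W\in\scr W}\mathrm{cl}\,W|<\lambda$ — wait, more simply $C\setminus\bigcup_{W\in\scr W}\mathrm{cl}\,W\subseteq C\setminus\bigcup\scr W\subseteq X\setminus\bigcup\scr V$, giving $saL(C,X,\lambda)\le\kappa$; this mirrors exactly the proof of \ref{cldhered}(a). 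The inequality $skL(X,\lambda)\le L(X)$ is noted already in the text. For (b), $saL_c(X,\lambda)\le saL_\kappa(X,\lambda)$ holds because every closed set is $\kappa$-closed (as $\mathrm{cl}_\kappa A\subseteq\mathrm{cl}\,A$), so the supremum in $saL_\kappa$ is over a larger family; and $saL_\kappa(X,\lambda)\le aL_\kappa(X)$ because replacing "$<\lambda$'' by "$=\varnothing$'' in the definition of $saL$ gives exactly $aL$, so $saL(A,X,\lambda)\le aL(A,X)$ for every $A$. Part (c) is the same "$<\lambda$ weakens $=\varnothing$'' observation applied with $A=X$ and with $A$ ranging over closed sets.

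\medskip

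\noindent\textbf{Parts (d), (g): monotonicity in $\lambda$.} Part (g) says $\lambda\le\gamma\Rightarrow saL_\kappa(X,\gamma)\le saL_\kappa(X,\lambda)$: if $|A\setminus\bigcup_{V\in\scr V}\mathrm{cl}\,V|<\lambda\le\gamma$ then automatically $<\gamma$, so any witness for $saL(A,X,\lambda)$ witnesses $saL(A,X,\gamma)$; take sup over $\kappa$-closed $A$. Part (d): when $\lambda=\omega$ (equivalently $\omega_1$), the condition $|A\setminus\bigcup_{V\in\scr V}\mathrm{cl}\,V|<\omega_1$ together with the fact that we may shrink $\scr V$ — actually one argues $saL_c(X,\omega)=aL_c(X)$ directly: for the $\le$ direction note $<\omega_1$ means finite, and one can absorb finitely many leftover points by adding to $\scr V$, for each such point, one member of the original cover $\scr U$ whose closure contains it, still keeping $|\scr V|\le\kappa$; for $\ge$, $aL_c(X)\ge saL_c(X,\omega)$ is (c). At the other extreme $saL_c(X,|X|^+)=\omega$: for any closed $A$ and any open cover, the empty subfamily $\scr V=\varnothing$ already has $|A\setminus\varnothing|=|A|\le|X|<|X|^+$, so $saL(A,X,|X|^+)=\omega$. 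The middle inequalities of (d) are instances of (g) once one checks $\omega\le\lambda\le|X|^+$. (I note a typo "$sX$'' in the displayed chain which should be "$X$''.)

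\medskip

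\noindent\textbf{Parts (e), (f): the tightness-reduction steps.} These are the substantive parts. For (e), assuming $t(X)\le\kappa$, I must show that the supremum of $saL(A,X,\lambda)$ over $\kappa$-closed $A$ equals the supremum over closed $A$. The inequality $\ge$ is (b). For $\le$, let $A$ be $\kappa$-closed; by Proposition~\ref{prop}(c), $t(X)\le\kappa$ gives $\mathrm{cl}_\kappa B=\mathrm{cl}\,B$ for all $B$, hence a $\kappa$-closed set is the union of closures of its $\le\kappa$-sized subsets in a way that lets one... actually the cleanest route: show $saL(A,X,\lambda)$ for $\kappa$-closed $A$ is dominated by $saL_c(X,\lambda)\cdot t(X)$. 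This is the analogue of \ref{prop}(d) (which states $aL_\kappa(X)=aL_c(X)$ when $t(X)\le\kappa$) and of \ref{propA}(f), so I would mimic the proof of \ref{prop}(d) from \cite{CCP2013}: given an open cover $\scr U$ of the $\kappa$-closed set $A$, use that $A=\mathrm{cl}_\kappa A=\bigcup\{\mathrm{cl}\,B:B\in[A]^{\le\kappa}\}$, handle each $\mathrm{cl}\,B$ (a closed set) with $saL_c$, and assemble $\le\kappa$ many pieces. The main obstacle — and the one place real care is needed — is the bookkeeping here: one produces a leftover set of size $<\lambda$ inside each $\mathrm{cl}\,B$, but $A$ is a union of potentially many such $\mathrm{cl}\,B$, so one must iterate $\kappa$-many times (a closing-off of length $\kappa^+$ or an induction on a well-ordering of $[A]^{\le\kappa}$), at each stage covering all but $<\lambda$ of the current piece and feeding the leftover back in, exactly as in the $aL_c$/$aL_\kappa$ argument; since $\mathrm{cl}_\kappa$ is idempotent and $t(X)\le\kappa$ the process stabilizes and the union of the $\le\kappa$-many chosen subfamilies is the required $\scr V$ with $|A\setminus\bigcup_{V\in\scr V}\mathrm{cl}\,V|<\lambda$. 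Part (f) is then the quantitative form: if $saL_c(X,\lambda)\,t(X)\le\kappa$, feed this bound into the same construction to get $saL_\kappa(X,\lambda)\le\kappa$; it follows formally from (e) together with the observation (proved by the same closing-off) that $saL_\kappa(X,\lambda)\le saL_c(X,\lambda)\cdot t(X)$ whenever $t(X)\le\kappa$, which is really the content one establishes.

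\medskip

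\noindent\textbf{Part (h).} Let $Y$ be discrete of size $\mathfrak c$ and consider $X=Y\cupdot\beta\omega$. That $aL(Y\cupdot\beta\omega)=\mathfrak c$: the cover of $X$ by the singletons of $Y$ together with $\{\beta\omega\}$ has no subfamily of size $<\mathfrak c$ whose closures cover (the points of $Y$ are isolated, so their closures are themselves, and one needs all $\mathfrak c$ of them), while $\le\mathfrak c$ always suffices, so $aL(X)=\mathfrak c$; this is essentially recorded already for the $skL$/$L$ analogue in \ref{cldhered}(d). For $saL(Y\cupdot\beta\omega,\mathfrak c^+)=\omega$: given any open cover $\scr U$ of $X$, pick one member covering a point of $\beta\omega$; since $\beta\omega$ is compact, finitely many members of $\scr U$ cover $\beta\omega$, and then $X$ minus the union of (the closures of) those finitely many members is contained in $Y$, hence has size $\le|Y|=\mathfrak c<\mathfrak c^+$. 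So a countable (indeed finite) subfamily leaves fewer than $\mathfrak c^+$ points uncovered, giving $saL(X,\mathfrak c^+)=\omega$. This is the direct analogue of the $skL$ computation in \ref{cldhered}(d) and of Example~\ref{exampleA}, and I would present it in one or two sentences.

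\medskip

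\noindent In summary, (a)--(d), (g), (h) are direct from the definitions and the quoted facts about $\mathrm{cl}_\kappa$, and the only real work is in (e)--(f), where I would reproduce, in the weakened "$<\lambda$'' setting, the closing-off argument that \cite{CCP2013} uses to show $aL_\kappa(X)=aL_c(X)$ under $t(X)\le\kappa$; the expected difficulty there is purely organizational — iterating the "cover all but $<\lambda$'' step $\kappa$-many times and checking the leftover stays below $\lambda$.
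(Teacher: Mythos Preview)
Your treatment of parts (a)--(d), (g), and (h) is fine and matches the paper, which simply declares these straightforward.

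For (e) and (f), however, you are working much harder than necessary, and in a direction that does not obviously close. The paper's argument for (e) is one line: since $t(X)\le\kappa$, Proposition~\ref{prop}(c) gives $\mathrm{cl}_\kappa B=\mathrm{cl}\,B$ for \emph{every} $B\subseteq X$; in particular, applying this with $B$ equal to a $\kappa$-closed set $A$ yields $A=\mathrm{cl}_\kappa A=\mathrm{cl}\,A$, so $A$ is closed. Hence under $t(X)\le\kappa$ the $\kappa$-closed sets are exactly the closed sets, and the suprema defining $saL_\kappa(X,\lambda)$ and $saL_c(X,\lambda)$ are taken over identical families. Part (f) then follows immediately from (e): from $saL_c(X,\lambda)t(X)\le\kappa$ we have $t(X)\le\kappa$, so $saL_\kappa(X,\lambda)=saL_c(X,\lambda)\le\kappa$.

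You actually cite the needed ingredient (Proposition~\ref{prop}(c)) and nearly reach the conclusion, but then pivot to an iterative ``cover all but $<\lambda$, feed the leftover back in'' scheme. That scheme is not just a detour; it has the very problem you flag: after $\kappa$ many stages the accumulated leftover is a union of $\kappa$ sets each of size $<\lambda$, which in general need not have size $<\lambda$, and nothing in your sketch explains how to absorb it. There is also no closing-off argument of this type in \cite{CCP2013} for Proposition~\ref{prop}(d) --- that result too is just the observation that $\kappa$-closed equals closed when $t(X)\le\kappa$. So drop the iteration entirely: the simple identification of the two families settles (e), and (f), at once.
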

\begin{proof}
The proofs of (a), (b), (c), (d), (g), and (h) are straightforward.  For (e), observe that if $t(X)\leq\kappa$ then for all $A\sse X$, $cl_\kappa(A)=cl(A)$ by \ref{prop}(c). If $B$ is $\kappa$-closed, then $B=cl_\kappa (B)=cl(B)$ and $B$ is closed. Thus,
\begin{align}
saL_{\kappa}(X,\lambda)&=\sup\{saL(A,X,\lambda):A\textup{ is $\kappa-$closed}\}\notag\\
&\leq\sup\{saL(A,X,\lambda):A \hskip 1mm \textup{ is closed}\}\notag\\
&=saL_{c}(X,\lambda).\notag
\end{align}
Now apply (b). For (f), apply \ref{prop}(e).
\end{proof}

The next Lemma is needed to insure that the hypothesis of the following Theorem is satisfied for some infinite $\kappa$.

\begin{lemma}\label{lemA} Let $X$ be a space.  $\{\kappa: \kappa \geq \psi_c(X), \kappa \geq saL_{\kappa}(X,\lambda)\} \ne \varnothing$.  
\end{lemma}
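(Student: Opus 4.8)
The statement claims that the class of cardinals $\kappa$ satisfying simultaneously $\kappa \geq \psi_c(X)$ and $\kappa \geq saL_\kappa(X,\lambda)$ is nonempty; to prove it we simply need to produce one such $\kappa$. My plan is to exhibit an explicit candidate, namely $\kappa = 2^{\psi_c(X)\cdot\lambda\cdot saL_c(X,\lambda)}$ or, perhaps more economically, to show that $\kappa = saL_c(X,\lambda)\cdot t(X)\cdot\psi_c(X)$ already works. The first inequality $\kappa \geq \psi_c(X)$ is then immediate from the definition of $\kappa$ as a product containing $\psi_c(X)$ as a factor. The content is entirely in the second inequality $saL_\kappa(X,\lambda)\leq\kappa$, and here I would invoke Proposition~\ref{propA}(f): if $saL_c(X,\lambda)\,t(X)\leq\kappa$, then $saL_\kappa(X,\lambda)\leq\kappa$. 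Since $\kappa$ is chosen to be a product that dominates both $saL_c(X,\lambda)$ and $t(X)$, the hypothesis of \ref{propA}(f) is satisfied, and the conclusion follows.

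More carefully, set $\kappa = saL_c(X,\lambda)\cdot t(X)\cdot\psi_c(X)$. This is an infinite cardinal since all three factors are infinite cardinal functions. We check the two required inequalities. First, $\psi_c(X)\leq\kappa$ trivially, since $\psi_c(X)$ is one of the factors in the product defining $\kappa$. Second, to see $saL_\kappa(X,\lambda)\leq\kappa$: we have $saL_c(X,\lambda)\cdot t(X)\leq\kappa$ because both $saL_c(X,\lambda)$ and $t(X)$ divide the product $\kappa$ (more precisely, each is $\leq\kappa$ and $\kappa$ is infinite, so their product is $\leq\kappa$). Now Proposition~\ref{propA}(f) applies directly and yields $saL_\kappa(X,\lambda)\leq\kappa$. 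This finishes the argument.

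**Main obstacle.** There is no genuine obstacle here; the lemma is a bookkeeping statement whose whole point is to guarantee that the hypotheses of the subsequent theorem (presumably of the form ``if $\kappa\geq\psi_c(X)$ and $\kappa\geq saL_\kappa(X,\lambda)$ then $|X|\leq\dots$'') are not vacuous. The only thing to be careful about is to use a formula for $\kappa$ that actually makes Proposition~\ref{propA}(f) applicable — that is, one containing both a $saL_c(X,\lambda)$ factor and a $t(X)$ factor (or enough to dominate their product) together with a $\psi_c(X)$ factor. One should also note in passing that this $\kappa$ is exactly the exponent that will show up in the cardinality bound being built toward, so choosing it this way is natural rather than ad hoc. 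If one prefers to avoid even invoking \ref{propA}(f), one can instead use \ref{propA}(e): when $t(X)\leq\kappa$ one has $saL_\kappa(X,\lambda)=saL_c(X,\lambda)\leq\kappa$ directly, which is arguably cleaner, and the chosen $\kappa$ certainly satisfies $t(X)\leq\kappa$.
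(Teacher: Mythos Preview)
Your proposal is correct and follows essentially the same route as the paper: define $\kappa$ as the product of $t(X)$, $\psi_c(X)$, and an appropriate $saL_c$ term, then invoke Proposition~\ref{propA}(e) (or (f)) to conclude $saL_\kappa(X,\lambda)=saL_c(X,\lambda)\leq\kappa$. The only cosmetic difference is that the paper takes $\kappa=t(X)\psi_c(X)saL_c(X,\omega_1)$, using \ref{propA}(d) to bound $saL_c(X,\lambda)\leq saL_c(X,\omega_1)$ and thereby obtaining a single $\kappa$ that works for every $\lambda$ simultaneously, whereas your $\kappa=t(X)\psi_c(X)saL_c(X,\lambda)$ depends on the given $\lambda$ (and is potentially smaller).
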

\begin{proof}   For $\kappa = t(X)\psi_c(X)sL_c(X,\omega_1) $, $\psi_c(X) \leq  \kappa$.  By  \ref{propA}(d,e), we have that $saL_{\kappa}(X,\lambda) \leq \kappa$ for every $\lambda$. In particular, $saL_{\kappa}(X,2^{\kappa}) \leq \kappa$. 

\end{proof}

\begin{theorem}\label{thmsaL}
 Let X be a Hausdorff space and $\kappa$ an infinite cardinal such that $\psi_c(X) \leq \kappa$  and $saL_{\kappa}(X,2^\kappa) \leq \kappa$. Then $|X| \leq 2^{\kappa}$.
\end{theorem}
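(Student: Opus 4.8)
The plan is to apply the Main Theorem with $\kappa$ as given, taking the operator $c$ to be the $\theta$-closure... no — more carefully, we should take $c$ to be an operator producing $\kappa$-closed sets, and the expansive function $h$ to be the $\theta$-closure operator $cl_\theta$. Recall that in an Urysohn-type setting one typically needs $|cl_\theta A|\le |A|^{\le\kappa}$, but here $X$ is merely Hausdorff, so we must be more economical. Let me reconsider: since $\psi_c(X)\le\kappa$, Proposition~\ref{prop}(e) gives $|cl_\kappa A|\le |A|^{\le\kappa}$ for any $A$, so $c=cl_\kappa$ is a legitimate operator $[X]^{\le\kappa}\to[X]^{\le 2^\kappa}$.

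First I would set up the families $\{V(x,\alpha):\alpha<\kappa\}$. Since $\psi_c(X)\le\kappa$, for each $x\in X$ fix a family $\{W(x,\alpha):\alpha<\kappa\}$ of open sets with $\bigcap_\alpha cl\,W(x,\alpha)=\{x\}$; put $V(x,\alpha)=W(x,\alpha)$. For the expansive function required by \textbf{(E)}, I would take $h$ to be a map sending a $\le\kappa$-sized family $\scr S$ of such sets to a point — but since \textbf{(E)} demands $h(\cup\scr S)\supseteq\cup\scr S$ as a \emph{subset} of $X$, the cleanest choice is $h=cl_\theta$ restricted appropriately, or simply the identity if that suffices; I expect the identity function works here because the cover-separation condition only needs $q\notin h(\bigcup V(x,f(x)))$ and $|H\setminus h(\bigcup V(x,f(x)))|<2^\kappa$, and the $\theta$-closure is what lets us absorb closures of the $V$'s. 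So I would in fact take $h=cl_\theta$ on unions: $h(\scr S)=cl_\theta(\cup\scr S)$, which is expansive.

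The main work is verifying \textbf{(C-S)}. Suppose $\varnothing\ne H\in[X]^{\le 2^\kappa}$ satisfies $cl_\kappa A\subseteq H$ for all $A\in[H]^{\le\kappa}$; then $cl_\kappa H\subseteq H$, i.e. $H$ is $\kappa$-closed. Fix $q\notin H$. For each $x\in H$, since $q\ne x$ and $\bigcap_\alpha cl\,V(x,\alpha)=\{x\}$, choose $\alpha_x<\kappa$ with $q\notin cl\,V(x,\alpha_x)$. Then $\{X\setminus cl\,V(x,\alpha_x):x\in H\}$ is a cover of $H$ by open sets (since if $y\in H$, then $y\notin cl\,V(y,\beta)$ for some $\beta$... — wait, we need each point of $H$ covered; actually $y\in X\setminus cl\,V(x,\alpha_x)$ fails in general, so instead cover $H$ by the open sets $V(x,\alpha_x)$ themselves, which do cover since $x\in V(x,\alpha_x)$). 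Now $H$ is $\kappa$-closed, so $saL(H,X,2^\kappa)\le saL_\kappa(X,2^\kappa)\le\kappa$ by hypothesis. Hence there is $A\in[H]^{\le\kappa}$ with $|H\setminus\bigcup_{x\in A}cl\,V(x,\alpha_x)|<2^\kappa$. Set $f(x)=\alpha_x$ on $A$. Then $q\notin cl\,V(x,f(x))$ for each $x\in A$, so $q\notin\bigcup_{x\in A}cl\,V(x,f(x))$, and in fact $q\notin cl_\theta(\bigcup_{x\in A}V(x,f(x)))$ requires a little more: if $q\in cl_\theta(\bigcup V(x,f(x)))$ then every closed neighborhood of $q$ meets $\bigcup V(x,f(x))$ — this does not immediately contradict $q\notin\bigcup cl\,V(x,f(x))$. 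So here I would instead take $h=$ identity, giving $q\notin\bigcup_{x\in A}V(x,f(x))$ trivially and $|H\setminus\bigcup_{x\in A}V(x,f(x))|\le|H\setminus\bigcup_{x\in A}cl\,V(x,f(x))|<2^\kappa$. That verifies \textbf{(C-S)}.

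\textbf{The main obstacle} I anticipate is the choice of $h$: the $\theta$-closure is tempting because it meshes with $saL$ (which involves closures of the $V$'s), but Hausdorffness alone does not give $|cl_\theta A|\le 2^\kappa$ for $|A|\le\kappa$, nor does it let us conclude $q\notin cl_\theta(\bigcup V(x,f(x)))$ from $q\notin\bigcup cl\,V(x,f(x))$. The resolution is that we do \emph{not} need the $\theta$-closure at all in \textbf{(C-S)}: taking $h$ to be the identity works, because the almost-Lindel\"of covering gives a set of size $<2^\kappa$ left uncovered by the \emph{closures}, hence a fortiori by the sets $V(x,f(x))$ themselves, and $q$ is excluded from the union for free. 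The operator $c=cl_\kappa$ handles the $[X]^{\le\kappa}\to[X]^{\le 2^\kappa}$ requirement via $\psi_c(X)\le\kappa$ and Proposition~\ref{prop}(e). Once both conditions are checked, the Main Theorem yields $|X|\le 2^\kappa$ directly.
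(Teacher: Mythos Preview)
Your overall strategy matches the paper's: apply the Main Theorem with $c=cl_\kappa$ (using $\psi_c(X)\le\kappa$ and Proposition~\ref{prop}(e) to get $c:[X]^{\le\kappa}\to[X]^{\le 2^\kappa}$), take $h$ to be the identity, show $H$ is $\kappa$-closed, and invoke $saL_\kappa(X,2^\kappa)\le\kappa$ to verify \textbf{(C-S)}. That part is fine.

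There is, however, a concrete error in your verification of \textbf{(C-S)}. You take $V(x,\alpha)=W(x,\alpha)$ to be the \emph{open} sets themselves, cover $H$ by $\{V(x,\alpha_x):x\in H\}$, and then write
\[
\left|H\setminus\bigcup_{x\in A}V(x,f(x))\right|\ \le\ \left|H\setminus\bigcup_{x\in A}cl\,V(x,f(x))\right|<2^\kappa.
\]
This inequality points the wrong way: since $\bigcup V\subseteq\bigcup cl\,V$, one has $H\setminus\bigcup V\ \supseteq\ H\setminus\bigcup cl\,V$, so only $\ge$ is guaranteed. The almost-Lindel\"of bound controls $|H\setminus\bigcup cl\,V|$, not the (possibly larger) $|H\setminus\bigcup V|$ that \textbf{(C-S)} with $h=\mathrm{id}$ actually requires.

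The paper avoids this by a small but essential change of setup: it defines $V(x,\alpha)=cl\,B(x,\alpha)$ (the \emph{closures}), while keeping the open family $\{B(x,\alpha):\alpha<\kappa\}$ as the cover to which $saL_\kappa$ is applied. Then $saL_\kappa$ gives $A\in[H]^{\le\kappa}$ with
\[
\left|H\setminus\bigcup_{x\in A}cl\,B(x,f(x))\right|\ =\ \left|H\setminus\bigcup_{x\in A}V(x,f(x))\right|<2^\kappa,
\]
exactly what \textbf{(C-S)} needs; and since $q\notin cl\,B(x,\alpha_x)=V(x,\alpha_x)$ for each $x\in H$, also $q\notin\bigcup_{x\in A}V(x,f(x))$. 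Note that the Main Theorem does not require the $V(x,\alpha)$ to be open, so taking them to be closed sets is permitted. Once you make this adjustment, your argument goes through and coincides with the paper's proof.
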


\begin{proof} We apply the Main Theorem. For $A \subseteq X$, let $c(A) = cl_{\kappa}(A)$. As $\psi_c(X) \leq \kappa$, for each $x \in X$, there is a family of open neighborhoods $\{B(x,\alpha):\alpha < \kappa\}$ of $x$ such that $\{x\} =\Meet_{\alpha<\kappa}B(x,\alpha)= \bigcap_{\alpha < \kappa}clB(x,\alpha)$. Let $V(x,\alpha) = cl(B(x,\alpha))$. We verify the hypotheses of the Main Theorem are satisfied, where $c$ is the $\kappa$-closure operator $cl_\kappa$. (It is noted in \cite{CCP2013} that $cl_\kappa$ is an operator). First note by \ref{prop}(e) that $cl_\kappa:[X]^{\leq\kappa}\to [X]^{\leq 2^\kappa}$. Let $h$ be the identity function.

We verify condition \textbf{C-S}. Suppose that $\es\neq H\in [X]^{\leq 2^\kappa}$ is such that $cl_\kappa(A)\sse H$ for all $A\in[H]^{\leq\kappa}$. We show $H$ is $\kappa$-closed. If $A\in[H]^{\leq\kappa}$ then by assumption $cl_\kappa(A)\sse H$. Thus, $\Un\{cl_\kappa(A):A\in[H]^{\leq\kappa}\}\sse H$. But $clA=cl_\kappa(A)$ for $|A|\leq\kappa$ by section 2, fact (c) in \cite{CCP2013}. Thus $cl_\kappa H=\Un\{clA:A\in[H]^{\leq\kappa}\}\sse H$ and $H$ is $\kappa$-closed.

Let $q\in X\minus H$. For all $p\in H$, there exists $f(p)\in\kappa$ such that $q\notin V(p, f(p))$. $\{B(p,f(p)): p\in H\}$ is an open cover of the $\kappa$-closed set $H$. As $saL_\kappa(X,2^\kappa)\leq\kappa$, there exists $A\in [H]^{\leq\kappa}$ such that
$$\left|H\minus\Un_{p\in A}V(p,f(p))\right|<2^\kappa.$$
Note also that $q\notin\Un_{p\in A}V(p,f(p))$. This verifies condition \textbf{C-S} and we conclude $|X| \leq 2 ^{\kappa}$. 
\end{proof}

Another way of stating \ref{thmsaL} is that for a space $X$ and  infinite cardinal $\lambda$,  $|X| \leq 2^{\mu}$ for any infinite cardinal  $\mu \in \{\kappa: \kappa \geq \psi_c(X)saL_{\kappa}(X,\lambda)\} $.  The next result plays a key role in applications of \ref{thmsaL}.

\begin{proposition}\label{proA} Let $X$ be a space. Then  $saL_c(X,2^{\chi(X)})\psi_c(X)t(X) \in \{\kappa: \kappa \geq \psi_c(X)saL_{\kappa}(X,2^{\kappa})\}$.
  \end{proposition}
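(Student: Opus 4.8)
The plan is to show that $\kappa_0 := saL_c(X,2^{\chi(X)})\psi_c(X)t(X)$ simultaneously dominates $\psi_c(X)$ and $saL_{\kappa_0}(X,2^{\kappa_0})$, which is exactly membership in the stated set. The first containment, $\kappa_0 \geq \psi_c(X)$, is immediate from the definition of $\kappa_0$ as a product that includes $\psi_c(X)$. So the work is entirely in the second inequality, $\kappa_0 \geq saL_{\kappa_0}(X,2^{\kappa_0})$.

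First I would reduce the $\kappa_0$-closed version to the closed version using tightness. Since $t(X) \leq \kappa_0$, Proposition~\ref{prop}(c) gives $cl_{\kappa_0}(A) = cl(A)$ for every $A \subseteq X$, so every $\kappa_0$-closed set is closed; hence $saL_{\kappa_0}(X,2^{\kappa_0}) = saL_c(X,2^{\kappa_0})$ by the argument in Proposition~\ref{propA}(e) (or directly, $saL_{\kappa_0} \leq saL_c$ here since the sup is over a smaller family, combined with $saL_c \le saL_\kappa$ from \ref{propA}(b) — actually the cleanest route is \ref{propA}(e) itself, which requires exactly $t(X)\le\kappa_0$). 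So it suffices to prove $saL_c(X,2^{\kappa_0}) \leq \kappa_0$.

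Next, I would compare $saL_c(X,2^{\kappa_0})$ with $saL_c(X,2^{\chi(X)})$. Since $\chi(X) \geq \psi_c(X)$ and $\chi(X)\ge t(X)$... hmm, actually one needs $2^{\chi(X)} \leq 2^{\kappa_0}$, which holds because $\chi(X) \leq \kappa_0$: indeed $\psi_c(X) \leq \chi(X)$ and $t(X)\le\chi(X)$, but that gives $\chi(X)$ on the wrong side. The correct observation is that $saL_c(X,2^{\chi(X)})$ is the \emph{first} factor of $\kappa_0$, so certainly $saL_c(X,2^{\chi(X)}) \leq \kappa_0$; and since $\gamma \mapsto saL_c(X,\gamma)$ is non-increasing (Proposition~\ref{propA}(d,g)), I want $2^{\kappa_0} \geq 2^{\chi(X)}$ so that $saL_c(X,2^{\kappa_0}) \leq saL_c(X,2^{\chi(X)}) \leq \kappa_0$. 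This needs $\chi(X) \leq \kappa_0 = saL_c(X,2^{\chi(X)})\psi_c(X)t(X)$, which is \emph{not} obvious and is, I expect, the main obstacle. I would resolve it via a pseudocharacter-style bound: using $saL_c(X,2^{\chi(X)})$ together with the Hausdorff (or Urysohn/character) separation — much as in Proposition~\ref{cldhered}(e), which shows $\psi_c(X) \leq \psi(X)\cdot skL(X,\psi(X)^+)$ — one should be able to show $\chi(X)$, or at least the relevant closed-pseudocharacter quantity, is bounded by $saL_c(X,2^{\chi(X)})\psi_c(X)t(X)$. If a clean bound $\chi(X)\le\kappa_0$ is not available, the fallback is to avoid monotonicity in $\gamma$ altogether: run the covering argument directly, showing that for any closed $C$ and open cover $\scr U$ of $C$, the $saL_c(X,2^{\chi(X)})$-sized subfamily leaving a remainder of size $<2^{\chi(X)}\le 2^{\kappa_0}$ already witnesses $saL_c(X,2^{\kappa_0})\le\kappa_0$ — this works precisely when $2^{\chi(X)}\le 2^{\kappa_0}$, so we are back to the same inequality. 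I therefore expect the heart of the proof to be establishing $2^{\chi(X)} \leq 2^{\kappa_0}$ (equivalently $\chi(X)\le\kappa_0$ suffices), after which the chain $saL_{\kappa_0}(X,2^{\kappa_0}) = saL_c(X,2^{\kappa_0}) \leq saL_c(X,2^{\chi(X)}) \leq \kappa_0$ closes the argument.

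Assembling: from $\kappa_0 \geq \psi_c(X)$ and $\kappa_0 \geq saL_{\kappa_0}(X,2^{\kappa_0})$ we conclude $\kappa_0 \in \{\kappa : \kappa \geq \psi_c(X)\,saL_\kappa(X,2^\kappa)\}$, as claimed; note in particular that this set is nonempty, complementing Lemma~\ref{lemA}, and it is this $\kappa_0$ that one feeds into Theorem~\ref{thmsaL} to get $|X| \leq 2^{saL_c(X,2^{\chi(X)})\psi_c(X)t(X)}$, the intended improvement of the Bella--Cammaroto bound.
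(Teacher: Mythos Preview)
Your approach is essentially the paper's: invoke \ref{propA}(e) (using $t(X)\le\kappa_0$) to collapse $saL_{\kappa_0}$ to $saL_c$, and then invoke monotonicity in the second argument (\ref{propA}(g)) to pass from $2^{\kappa_0}$ to $2^{\chi(X)}$. The paper runs the same two steps in the reverse order---first \ref{propA}(g) at the $saL_\kappa$ level, then \ref{propA}(e)---but the logic is identical.

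The obstacle you carefully flag, namely the need for $2^{\chi(X)}\le 2^{\kappa_0}$ (equivalently $\chi(X)\le\kappa_0$), is \emph{not} resolved in the paper's proof either: the paper simply writes ``By 4.2(g), $saL_{\kappa}(X,2^{\kappa}) \leq saL_{\kappa}(X,2^{\chi(X)})$'' without checking the hypothesis of \ref{propA}(g). Since one always has $\psi_c(X)t(X)\le\chi(X)$ (so the last two factors of $\kappa_0$ cannot exceed $\chi(X)$) and $saL_c(X,2^{\chi(X)})$ carries no a~priori relation to $\chi(X)$, the inequality $\chi(X)\le\kappa_0$ is not automatic; this is a shared gap, not something you are missing. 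Note, however, that the intended application (Corollary~\ref{saLc}) is unaffected: if one replaces $2^{\chi(X)}$ by $2^{\psi_c(X)t(X)}$ in the statement, the same two-step argument goes through cleanly because then the exponent $\psi_c(X)t(X)$ is manifestly $\le\kappa_0$, and the text preceding Corollary~\ref{saLc} already records $saL_c(X,2^{\chi(X)}) \leq saL_c(X,2^{\psi_c(X)t(X)})$.
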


\begin{proof} Let  $\kappa = saL_c(X,2^{\chi(X)})\psi_c(X)t(X)$.  By 4.2(g), $saL_{\kappa}(X,2^{\kappa}) \leq saL_{\kappa}(X,2^{\chi(X)})$.  By 4.2(e), since $t(X) \leq \kappa$, $ saL_{\kappa}(X,2^{\chi(X)}) =  saL_c(X,2^{\chi(X)})$.  Thus, $saL_{\kappa}(X,2^{\kappa}) \leq \kappa$. 
\end{proof}

We now establish an improvement of the Bella-Cammaroto bound that 
$|X| \leq 2^{aL_c(X)t(X)\psi_c(X)}$ for  a space $X$. Observe by \ref{propA}(c) that $saL_c(X,2^{t(X)\psi_c(X)})\leq aL_c(X)$ and by \ref{propA}(g), $saL_c(X,2^{\chi(X)}) \leq saL_c(X,2^{\psi_c(X)t(X)}) $.  The following result is an immediate consequence of \ref{proA}.

\begin{corollary}\label{saLc}
 For a space $X$,  
$|X|\leq 2^{saL_c(X,2^{t(X)\psi_c(X)})t(X)\psi_c(X)}.$
\end{corollary}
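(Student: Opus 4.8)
The plan is to read this off directly from Proposition~\ref{proA} and Theorem~\ref{thmsaL}, which is what the phrase ``immediate consequence'' signals. Set
$$\kappa = saL_c(X,2^{t(X)\psi_c(X)})\,t(X)\psi_c(X), \qquad \mu = saL_c(X,2^{\chi(X)})\,\psi_c(X)t(X).$$
It suffices to establish (i) $|X|\leq 2^\mu$ and (ii) $\mu\leq\kappa$, for then $|X|\leq 2^\mu\leq 2^\kappa$.

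For (i): Proposition~\ref{proA} asserts precisely that $\mu\in\{\kappa:\kappa\geq\psi_c(X)\,saL_\kappa(X,2^\kappa)\}$. Since $\psi_c(X)$ and $saL_\mu(X,2^\mu)$ are both infinite, the product equals their maximum, so this unpacks to $\psi_c(X)\leq\mu$ and $saL_\mu(X,2^\mu)\leq\mu$. These are exactly the two hypotheses of Theorem~\ref{thmsaL} with the infinite cardinal $\mu$ playing the role of $\kappa$ there, and that theorem then delivers $|X|\leq 2^\mu$.

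For (ii): Since $X$ is Hausdorff, $\psi_c(X)\leq\chi(X)$, and $t(X)\leq\chi(X)$ holds for every space, so $\psi_c(X)t(X)\leq\chi(X)$ and hence $2^{\psi_c(X)t(X)}\leq 2^{\chi(X)}$. Because $saL_c(X,\cdot)$ is non-increasing in its last argument (immediate from the definition of $saL$; cf.\ \ref{propA}(d,g)), it follows that $saL_c(X,2^{\chi(X)})\leq saL_c(X,2^{\psi_c(X)t(X)})$, and multiplying through by $\psi_c(X)t(X)$ gives $\mu\leq\kappa$.

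I expect no genuine obstacle: the whole argument is bookkeeping with the cardinal-function inequalities already recorded in \S4, and the one step worth isolating is the monotonicity $saL_c(X,2^{\chi(X)})\leq saL_c(X,2^{\psi_c(X)t(X)})$, which is exactly what bridges the parameter $\lambda = 2^{\chi(X)}$ appearing in Proposition~\ref{proA} and the parameter $\lambda = 2^{t(X)\psi_c(X)}$ in the statement, and which rests only on $\psi_c(X)t(X)\leq\chi(X)$.
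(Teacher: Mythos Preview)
Your proof is correct and follows exactly the route the paper intends: apply Proposition~\ref{proA} together with Theorem~\ref{thmsaL} to get $|X|\leq 2^{\mu}$ for $\mu=saL_c(X,2^{\chi(X)})\psi_c(X)t(X)$, and then use $\psi_c(X)t(X)\leq\chi(X)$ and the monotonicity of $saL_c(X,\cdot)$ in the second argument (the paper also invokes \ref{propA}(g) for this) to conclude $\mu\leq\kappa$.
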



\begin{example}\label{E4.7} \rm{  Let $Z$ be a discrete space of size $\aleph_1$. As in \ref{E3.9},   we work in a model of ZFC in which  both $2^{\aleph_0} = \aleph_2$ and  $2^{\aleph_1} = \aleph_3$ are true and show that $Z$ satisfies 

$$2^{saL_c(Z,2^{t(Z)\psi_c(Z)})t(Z)\psi_c(Z)} < 2^{aL_c(Z)t(Z)\psi_c(Z)}.$$

\noindent As $aL_c(Z) = {\aleph_1}$  and $\psi_c(Z) = \aleph_0$, in this model, $2^{aL_c(Z)t(Z)\psi_c(Z)} = 2^{\aleph_1} = \aleph_3$. 
On the other hand, $2^{saL_c(Z,2^{t(Z)\psi_c(Z)})t(Z)\psi_c(Z)}  
= 2^{saL_c(Z,2^{\aleph_0})} $.  Again, in this model and using \ref{propA}(d)  when  $|Z| < 2^{\aleph_0}$, it follows that  $ 2^{saL_c(Z,2^{\aleph_0})} = 2^{\aleph_0} = \aleph_2$.   \qed}
\end{example}

 \noindent As $\psi_c(X)t(X) \leq \chi(X)$ for a space $X$, another consequence of \ref{proA} is: $|X| \leq 2^{saL(X,2^{\chi(X)}){\chi(X)}}$. As $saL(X,2^{\chi(X)}) \leq skL(X,2^{\chi(X)})$ by \ref{propA}(a), we also have that $|X| \leq 2^{skL(X,2^{\chi(X)}){\chi(X)}}$.  Compare this with the bound $|X|\leq 2^{skL(X,2^{\psi(X)t(X)})\psi(X)t(X)}$ obtained in \ref{thmA}.  These two results are variations of each other but neither one implies the other.

\vskip 2mm 
\noindent Let $X$ be a space and $\lambda$ an infinite cardinal. The \textit{skew-$\lambda$ almost Lindel\"of closed pseudo-character of $X$}, denoted by $saL_{\kappa}\psi_c(X,\lambda)$, is defined as $\min\{\ka : \ka \geq \y_{c}(X)saL_{\ka}(X,\lambda)\}$.  The following result is an immediate consequence of the \ref{thmsaL} and an improvement of Corollary 1 in \cite{CCP2013}.

\begin{corollary} \label{Cor1} If $X$ is a space, then $|X| \leq 2^{saL_{\kappa}\psi_c(X)}$.
\end{corollary}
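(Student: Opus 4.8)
The plan is to show that the quantity $saL_\kappa\psi_c(X)$ is well-defined — i.e., that the set $\{\kappa : \kappa \geq \psi_c(X)saL_\kappa(X,\lambda)\}$ over which we take a minimum is nonempty — and then to simply feed the minimizing cardinal into Theorem~\ref{thmsaL}. Nonemptiness of this set is exactly the content of Lemma~\ref{lemA} (taking $\lambda$ to be whatever infinite cardinal is implicit in the definition of $saL_\kappa\psi_c(X,\lambda)$; note the statement of the corollary writes $saL_\kappa\psi_c(X)$, suppressing the $\lambda$, so I would read it as: for every infinite cardinal $\lambda$, $|X| \leq 2^{saL_\kappa\psi_c(X,\lambda)}$).

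First I would set $\mu = saL_\kappa\psi_c(X,\lambda) = \min\{\kappa : \kappa \geq \psi_c(X)saL_\kappa(X,\lambda)\}$, which exists as an actual minimum since this is a nonempty class of cardinals (nonempty by Lemma~\ref{lemA}, and every nonempty class of ordinals has a least element). Then $\mu$ is an infinite cardinal satisfying $\mu \geq \psi_c(X)$ and $\mu \geq saL_\mu(X,\lambda)$; in particular $\psi_c(X) \leq \mu$ and $saL_\mu(X,\lambda) \leq \mu$. Next I would observe that $saL_\mu(X,2^\mu) \leq \mu$: this follows from Proposition~\ref{propA}(g) applied with $\gamma = 2^\mu \geq \lambda$ — wait, this requires $\lambda \leq 2^\mu$, which holds since $\lambda$ is infinite and $\mu \geq \omega$ forces $\lambda \leq 2^\lambda \leq 2^{\psi_c(X)saL_\mu(X,\lambda)} \leq 2^\mu$ when $\lambda \leq \mu$; more carefully, if the intended reading fixes a specific $\lambda$ then one checks $2^\mu \geq \lambda$ directly, or alternatively one restricts attention (as Lemma~\ref{lemA} does in its proof) to $\lambda = 2^\kappa$-type cardinals where the inequality is automatic. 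Granting $saL_\mu(X,2^\mu) \leq \mu$, apply Theorem~\ref{thmsaL} with the cardinal $\mu$ in the role of $\kappa$: the hypotheses $\psi_c(X) \leq \mu$ and $saL_\mu(X,2^\mu) \leq \mu$ are met, so $|X| \leq 2^\mu = 2^{saL_\kappa\psi_c(X,\lambda)}$.

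I expect the main (and really only) obstacle to be a bookkeeping one: reconciling the suppressed-$\lambda$ notation in the corollary statement with the $\lambda$-indexed definition, and making sure the passage from $saL_\mu(X,\lambda) \leq \mu$ to $saL_\mu(X,2^\mu) \leq \mu$ is clean. If one uses the remark just after \ref{thmsaL} — ``$|X| \leq 2^\mu$ for any infinite $\mu \in \{\kappa : \kappa \geq \psi_c(X)saL_\kappa(X,\lambda)\}$'' — then the argument is essentially immediate: $saL_\kappa\psi_c(X,\lambda)$ is by definition the least such $\mu$, so that remark with this particular $\mu$ gives the bound with nothing further to prove. Everything else (that a least element exists, that it is infinite) is routine, and the improvement over Corollary~1 of \cite{CCP2013} comes for free from the inequalities in Proposition~\ref{propA} relating $saL$-type invariants to the $aL$-type invariants used there.
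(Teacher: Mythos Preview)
Your proposal is correct and follows exactly the paper's approach: the paper states the corollary as ``an immediate consequence of \ref{thmsaL}'' (via the restatement you quote, that $|X|\leq 2^\mu$ for any $\mu\in\{\kappa:\kappa\geq\psi_c(X)saL_\kappa(X,\lambda)\}$), with Lemma~\ref{lemA} supplying nonemptiness so the minimum exists. Your caution about the suppressed $\lambda$ and the passage from $saL_\mu(X,\lambda)\leq\mu$ to $saL_\mu(X,2^\mu)\leq\mu$ is well placed---the paper glosses over this, and the restatement after \ref{thmsaL} is really only unproblematic when $\lambda\leq 2^\mu$ (equivalently, one reads $saL_\kappa\psi_c(X)$ as $\min\{\kappa:\kappa\geq\psi_c(X)saL_\kappa(X,2^\kappa)\}$, which is what Lemma~\ref{lemA} actually verifies is nonempty).
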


\vskip 5mm


\section{The skew-$\lambda$ weak Lindel\"of degree}
\begin{definition} 
Let $X$ be a space, $A\subseteq X$, and $\lambda$ an infinite  cardinal. 
\begin{itemize}
\item[(a)] The \emph{skew-$\lambda$ weak-Lindel\"of degree of $A$ in $X$}, denoted by $swL(A,X,\lambda)$, is the least infinite cardinal $\kappa$ such that for every cover $\mathcal{U}$ of $A$ by open sets in $X$ there exists $\mathcal{V}\in[\mathcal{U}]^{\leq\kappa}$ such that $\left|A\backslash cl\bigcup\mathcal{V}\right|<\lambda$. The \emph{skew-$\lambda$ weak-Lindel\"of degree of $X$}, denoted by $swL(X,\lambda)$, is $swL(X,X,\lambda)$. 

\item[(b)] The \emph{skew-$\lambda$ weak-Lindel\"of degree of $X$ with respect to closed sets}, denoted by $swL_{c}(X,\lambda)$, is defined by
$$swL_{c}(X,\lambda)=\sup\{swL(A,X,\lambda):A \hskip 1mm \textup{ is closed}\}.$$
\end{itemize}
\end{definition}

Note that as $swL(X, \lambda)$ and $\lambda$ are both infinite, it follows that $swL(X, \omega) = swL(X, \omega_1)$ and  $swL_c(X, \omega) = swL_c(X, \omega_1)$. Useful relationships between the above cardinal invariants are given in the following proposition.


\begin{proposition}\label{prop5}
 For any space $X$ and cardinals $\lambda,\kappa$,
\begin{itemize}
\item[(a)] $swL(X,\lambda)\leq saL(X,\lambda)\leq saL_c(X,\lambda) \leq skL(X,\lambda)\leq L(X)$,
\item[(b)] $swL_c(X,\lambda) \leq saL_c(X,\lambda) \leq saL_{\kappa}(X,\lambda)\leq aL_{\kappa}(X)$,
\item[(c)] $swL(X,\lambda)\leq wL(X)$ and $swL_c(X,\lambda)\leq wL_c(X)$,
\item[(d)]  $swL_c(X, \omega) = wL_c(X) \geq swL_c(sX,\lambda) \geq swL_c(X,|X|^+) = \omega$,
\item[(e)] If $\lambda\leq\gamma$, then $swL_c(X,\gamma)\leq swL_c(X,\lambda)$, 
\item[(f)] If $X$ is normal, then $swL_c(X,\gamma) = swL(X,\lambda)$, and

\item[(g)] if $Y$ is discrete space such that $|Y| = \frak c$, then \newline $swL(Y \cupdot  \beta\omega, \frak c^+) = \omega < \frak c = wL(Y \cupdot  \beta\omega)$, 
\end{itemize}
\end{proposition}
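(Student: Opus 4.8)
The plan is to handle parts (a)--(e) and (g) by the same routine bookkeeping with the definitions that was used for Propositions~\ref{cldhered} and \ref{propA}, and to concentrate the real effort on (f). For the chains in (a) and (b) I would just note that along each chain the demand on the chosen subfamily $\scr{V}$ is progressively weakened: one passes from ``$A\sse cl\Un\scr{V}$'' to ``$\left|A\minus cl\Un\scr{V}\right|<\lambda$'', uses $cl\Un\scr{V}\supseteq\Un_{V\in\scr{V}}clV\supseteq\Un\scr{V}$, and then narrows the family of test sets $A$ from $\kappa$-closed to closed to $A=X$; so any witnessing subfamily for the stronger invariant also witnesses the weaker one. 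Part (c) is immediate, since $A\sse cl\Un\scr{V}$ forces $\left|A\minus cl\Un\scr{V}\right|=0<\lambda$ (applied term-by-term over closed $A$ for the $swL_c$ statement), and (e) is just the remark that $\lambda\leq\gamma$ makes ``$<\lambda$'' imply ``$<\gamma$''. For (d) the two outer equalities need a word: $swL_c(X,\omega)=wL_c(X)$ because a subfamily leaving a \emph{finite} residue can be enlarged by finitely many further cover members---without changing its cardinality, $\kappa$ being infinite---to one whose union has closure containing the closed test set; and $swL_c(X,|X|^+)=\omega$ because a single member of any cover already leaves a residue of size $\leq|X|<|X|^+$; the middle inequalities are instances of (e), together with the observation that $swL_c(X,\lambda)=\omega$ as soon as $\lambda\geq|X|^+$. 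For (g) I would use that $\beta\omega$ is compact and clopen in $Y\cupdot\beta\omega$, so a finite subfamily of any open cover handles the $\beta\omega$ summand and the remaining residue lies inside $Y$, of size $\mathfrak{c}<\mathfrak{c}^+$; hence $swL(Y\cupdot\beta\omega,\mathfrak{c}^+)=\omega$, while $wL(Y\cupdot\beta\omega)=wL(Y)\cdot wL(\beta\omega)=\mathfrak{c}$ since $Y$ is a clopen discrete subspace of size $\mathfrak{c}$.

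For (f) I would assume $X$ is normal and prove $swL_c(X,\lambda)=swL(X,\lambda)$; the inequality ``$\geq$'' is trivial because $X$ is a closed subset of itself. Set $\kappa=swL(X,\lambda)$, fix a closed $C\sse X$, and let $\scr{U}$ be a cover of $C$ by open subsets of $X$. The first step is to invoke normality: with $W=\Un\scr{U}\supseteq C$, choose an open $V$ with $C\sse V\sse clV\sse W$. Then $\scr{U}\un\{X\minus clV\}$ is an open cover of $X$, so there is $\scr{V}''$ of size $\leq\kappa$ drawn from it with $\left|X\minus cl\Un\scr{V}''\right|<\lambda$; put $\scr{V}=\scr{V}''\minus\{X\minus clV\}\in[\scr{U}]^{\leq\kappa}$. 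The key point is that $V$ is open and disjoint from $X\minus clV$, hence disjoint from $cl(X\minus clV)$, so $C\sse V$ is disjoint from $cl(X\minus clV)$ too; since $\Un\scr{V}''\sse\Un\scr{V}\un(X\minus clV)$ gives $cl\Un\scr{V}''\sse cl\Un\scr{V}\un cl(X\minus clV)$, intersecting with $C$ yields $C\meet cl\Un\scr{V}''\sse cl\Un\scr{V}$. Therefore $C\minus cl\Un\scr{V}\sse X\minus cl\Un\scr{V}''$, so $\left|C\minus cl\Un\scr{V}\right|<\lambda$, which shows $swL(C,X,\lambda)\leq\kappa$; taking the supremum over all closed $C$ completes (f).

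I expect the only nonroutine step to be this normality argument in (f); it is the natural ``leftover-set'' refinement of the classical identity $wL(X)=wL_c(X)$ for normal spaces used by Bell, Ginsburg, and Woods~\cite{BGW1978} and by Alas~\cite{Alas1993}, and the only mild care it needs is the elementary facts that an open set disjoint from a set is disjoint from that set's closure and that $cl$ distributes over finite unions. Everything else is pure unwinding of the definitions.
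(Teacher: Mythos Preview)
Your proposal is correct and essentially what the paper has in mind: the paper's proof consists of the single sentence ``The proofs are straightforward,'' and you have supplied exactly the routine verifications one would expect, including the correct reading of the evident typo in (f) as $swL_c(X,\lambda)=swL(X,\lambda)$. Your normality argument for (f) is the natural one and is precisely the ``leftover'' refinement of the classical $wL_c=wL$ proof you cite.
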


\begin{proof}  The proofs are straightforward. 
\end{proof}

The following result provides an improvement of the  Alas bound  $|X| \leq 2^{wL_c(X)\chi(X)}$ for Urysohn space $X$ as $swL_c(X,\lambda)\leq wL_c(X)$ for any cardinal $\lambda$. 


\begin{theorem}\label{thmUry}
If $X$ is Urysohn, then $|X|\leq 2^{swL_c(X,2^{\chi(X)})\chi(X)}$.  
\end{theorem}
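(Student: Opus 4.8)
The plan is to apply the Main Theorem with $\kappa = swL_c(X,2^{\chi(X)})\chi(X)$, using the $\theta$-closure operator $cl_\theta$ in place of the closure operator. The structure should parallel the proof of Theorem~\ref{thmsaL}: first check that $cl_\theta$ maps $[X]^{\leq\kappa}$ into $[X]^{\leq 2^\kappa}$ (this is exactly \ref{prop}(f), since $X$ is Urysohn and $\chi(X)\leq\kappa$), then build a suitable family $\{V(x,\alpha):\alpha<\kappa\}$ at each point, then verify the cover-separation condition \textbf{(C-S)}. The expansion function $h$ will again be the identity. The key point that makes the Urysohn hypothesis do work is that for a point $x$ with a neighborhood base $\{B(x,\alpha):\alpha<\kappa\}$ of size $\leq\chi(X)\leq\kappa$, and any $q\neq x$, Urysohn separation gives an $\alpha$ with $q\notin cl(B(x,\alpha))$, in fact with $q$ and $cl B(x,\alpha)$ separated by disjoint open sets; taking $V(x,\alpha)=B(x,\alpha)$ (or its closure) one gets that $q\notin cl\bigl(\bigcup_{x\in A}V(x,\alpha_x)\bigr)$ whenever $q\notin cl B(x,\alpha_x)$ is witnessed uniformly — more precisely, one uses that if $q\notin cl_\theta(\bigcup V)$ then $q$ is outside the $\theta$-closure, and the Urysohn property is what lets a single open cover by the $B(x,\alpha_x)$ push the leftover set below $2^\kappa$.

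**Key steps in order.** First, set $\kappa = swL_c(X,2^{\chi(X)})\chi(X)$ and record $\chi(X)\leq\kappa$, so each $x\in X$ has a neighborhood base $\{B(x,\alpha):\alpha<\kappa\}$; put $V(x,\alpha)=B(x,\alpha)$. Second, invoke \ref{prop}(f) to get that $cl_\theta\colon [X]^{\leq\kappa}\to[X]^{\leq 2^\kappa}$, so $c=cl_\theta$ is an eligible operator for the Main Theorem (it is an operator: $A\subseteq cl_\theta A$, and monotone). Third, verify \textbf{(E)} with $h$ the identity. Fourth, and this is the heart, verify \textbf{(C-S)}: suppose $\varnothing\neq H\in[X]^{\leq 2^\kappa}$ with $cl_\theta A\subseteq H$ for all $A\in[H]^{\leq\kappa}$; argue as in \ref{thmsaL} that $H$ is then $\theta$-closed (here one should be slightly careful since $cl_\theta$ need not be idempotent — but the hypothesis "$cl_\theta A\subseteq H$ for all countable-sized... $A$" combined with $\chi(X)\leq\kappa$ still forces $cl_\theta H\subseteq H$, because any $p\in cl_\theta H$ has a neighborhood meeting $H$ in its closure, and one can localize to a $\leq\kappa$-sized subset using the neighborhood base; if that fails one passes to $\kappa$-closure $cl_\kappa H$ and uses that $cl_\theta$ restricted appropriately agrees). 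Then use $swL_c(H,X,2^{\chi(X)})\leq swL_c(X,2^{\chi(X)})\leq\kappa$ (Proposition~\ref{prop5}(d) / monotonicity on closed sets): given $q\in X\setminus H$, for each $x\in H$ pick $\alpha_x<\kappa$ with $q\notin cl(B(x,\alpha_x))$ using that $X$ is Urysohn (so there are disjoint open $W_q\ni q$ and $B(x,\alpha_x)\supseteq$ small nbhd of $x$ with $q\notin cl B(x,\alpha_x)$). The family $\{B(x,\alpha_x):x\in H\}$ covers $H$; by $swL_c\leq\kappa$ there is $A\in[H]^{\leq\kappa}$ with $\bigl|H\setminus cl\bigl(\bigcup_{x\in A}B(x,\alpha_x)\bigr)\bigr|<2^{\chi(X)}\leq 2^\kappa$. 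Fifth, check $q\notin cl\bigl(\bigcup_{x\in A}V(x,\alpha_x)\bigr)$: this is where Urysohn is essential — one needs the single open set $W_q$ (or rather a $\theta$-type argument) to miss the closure of the union, which requires uniformizing the separating neighborhoods; the clean way is to choose, for each $x$, the $B(x,\alpha_x)$ inside a fixed neighborhood of $x$ whose closure misses $q$, and then observe $q\notin cl\bigcup_{x\in A}B(x,\alpha_x)$ fails in general unless one works with $\theta$-closure — so instead define $f(x)=\alpha_x$ and use that $q\notin cl\bigl(\bigcup_{x\in A}V(x,f(x))\bigr)$ does hold because the union of the $B(x,\alpha_x)$ is contained in $X\setminus\{q\}$ and, more strongly, $q$ has a neighborhood disjoint from each — actually one only needs $q\notin h(\bigcup V(x,f(x)))=\bigcup_{x\in A}V(x,f(x))$, which is immediate from $q\notin B(x,\alpha_x)$. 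Then conclude $|X|\leq 2^\kappa$ by the Main Theorem.

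**Main obstacle.** The delicate step is the bookkeeping around $cl_\theta$ not being idempotent: establishing that the hypothesis "$cl_\theta A\subseteq H$ for all $A\in[H]^{\leq\kappa}$" forces $H$ to behave like a closed (or $\theta$-closed) set so that $swL_c(H,X,2^{\chi(X)})\leq\kappa$ can be applied. I expect the resolution is to note that $\chi(X)\leq\kappa$ makes $cl_\theta$ "locally $\kappa$-determined" — for $p\in cl_\theta H$, testing with a $\leq\kappa$-sized neighborhood base at $p$ shows $p\in cl_\theta A$ for some $A\in[H]^{\leq\kappa}$, hence $p\in H$ — so $H$ is genuinely $\theta$-closed and in particular closed (since $cl H\subseteq cl_\theta H = H$), which is exactly what is needed for the closed-sets version of $swL_c$. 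A secondary subtlety is making sure the leftover cardinality bound $2^{\chi(X)}$ that appears in the definition of $swL_c(X,2^{\chi(X)})$ is genuinely $\leq 2^\kappa$; this is immediate since $\chi(X)\leq\kappa$. Once these are handled, everything else is a routine transcription of the \ref{thmsaL} argument with $cl$ replaced by $cl_\theta$ and $saL_\kappa$ replaced by $swL_c$.
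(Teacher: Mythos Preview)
Your choice of $h$ is the gap. You set $h=\text{identity}$, but then the first half of \textbf{(C-S)} demands $\bigl|H\setminus\bigcup_{x\in A}V(x,f(x))\bigr|<2^\kappa$. The invariant $swL_c(X,2^{\chi(X)})$ only gives you $\bigl|H\setminus cl\bigl(\bigcup_{x\in A}V(x,f(x))\bigr)\bigr|<2^\kappa$, which is strictly weaker: the set being bounded is smaller, and there is no general reason the residue outside the \emph{union} should also have size $<2^\kappa$. So with $h=\text{identity}$ you simply do not verify \textbf{(C-S)} from the skew weak-Lindel\"of hypothesis; you would need $skL$, not $swL_c$, and then you have proved nothing new beyond \ref{thmA}. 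Your remark ``actually one only needs $q\notin\bigcup_{x\in A}V(x,f(x))$'' is correct for the \emph{second} half of \textbf{(C-S)} under $h=\text{identity}$, but it is the first half that breaks.

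The fix, and what the paper does, is to take $h(A)=cl(A)$. Then the first half of \textbf{(C-S)} is exactly what $swL_c(H,X,2^{\chi(X)})\leq\kappa$ hands you. The price is that the second half becomes $q\notin cl\bigl(\bigcup_{x\in A}V(x,f(x))\bigr)$, and \emph{this} is where the Urysohn hypothesis is actually used. Since $H$ is $\theta$-closed (your argument for that is fine) and $q\notin H$, there is a basic open $V(q,\beta)$ with $cl\,V(q,\beta)\cap H=\varnothing$; for each $p\in H$ choose $f(p)$ so that $V(p,f(p))\cap V(q,\beta)=\varnothing$ (possible because $\{V(p,\alpha)\}$ is a base at $p$ and $p\notin cl\,V(q,\beta)$). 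Then $V(q,\beta)$ is an open neighbourhood of $q$ disjoint from $\bigcup_{x\in A}V(x,f(x))$, so $q\notin cl\bigl(\bigcup_{x\in A}V(x,f(x))\bigr)$. Your write-up circles around this (``one needs the single open set $W_q$ \ldots\ to miss the closure of the union, which requires uniformizing the separating neighborhoods''), but then abandons it in favour of the identity-$h$ shortcut, which does not work. Keep $c=cl_\theta$, switch to $h=cl$, and carry out the uniform separation via $V(q,\beta)$; then the proof goes through.
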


\begin{proof} We apply the Main Theorem. Let $\kappa = swL_c(X,2^{\chi(X)})\chi(X)$. For $A \subseteq X$, let $c(A) = cl_{\theta}A$ and $h(A)= cl(A)$. By \ref{prop}(f), as $\chi(X) \leq \kappa$, $|cl_\theta(A)| \leq |A|^{\kappa}$; so, $c:[X]^{\leq\kappa}\to [X]^{\leq 2^\kappa}$. It is straightforward to show that the function $c$ is an operator and $h$ is expansive.  For all $x\in X$, let $\{V(x,\alpha):\alpha<\kappa\}$ be a neighborhood base of open sets at $x$. We need to verify condition {\bf{C-S}}. Suppose that $\es\neq H\in [X]^{\leq 2^\kappa}$ is such that $cl_\theta(A)\sse H$ for all $A\in[H]^{\leq\kappa}$. First, we show $H$ is $\theta$-closed.  Let $p \in cl_\theta H$, $x_\alpha \in cl(V(p,\alpha) \cap H$ for $\alpha \in \kappa$, and $A  = \{x_\alpha: \alpha \in \kappa\}$. Note that $p \in cl_\theta A$ and $A \in [H]^{\leq\kappa}$.  As $cl_\theta A \subseteq H$, it follows that $H$ is $\theta$-closed.
To complete the verification of \textbf{C-S}, we start with a point $q\in X\minus H$. There is some $\beta \in \kappa$ such that $cl(V(q,\beta)) \cap H = \es$.   For each $p\in H$, as $\{V(p,\alpha):\alpha<\kappa\}$ is a base, there exists $f(p)\in\kappa$ such that $V(q,\beta)\cap V(p, f(p)) = \es$. $\{V(p,f(p)): p\in H\}$ is an open cover in $X$ of the $\theta$-closed set $H$. As $saL_\kappa(X,2^\kappa)\leq\kappa$, there exists $A\in [H]^{\leq\kappa}$ such that
$$\left|H\minus cl\left(\Un_{p\in A}V(p,f(p))\right)\right|<2^\kappa.$$
Note also that $q\notin cl(\Un_{p\in A}V(p,f(p)))$. This completes the verification of the condition \textbf{C-S} and we conclude $|X| \leq 2 ^{\kappa}$. 
\end{proof}


\begin{example}\label{E5.4} \rm{ As in \ref{E3.9} and \ref{E4.7}, we will show that   a discrete space $Z$ of size $\aleph_1$, in a model of ZFC in which  both $2^{\aleph_0} = \aleph_2$ and  $2^{\aleph_1} = \aleph_3$ are true, satisfies 

$$2^{swL_c(Z,2^{\chi(Z)})\chi(Z)} < 2^{wL_c(Z)\chi(Z)}.$$

\noindent First note that $Z$ is Urysohn.  As $wL_c(Z) = {\aleph_1}$  and $\chi(Z) = \aleph_0$, in this model, $2^{wL_c(Z)\chi(Z)} = 2^{\aleph_1} = \aleph_3$. 
On the other hand, $2^{swL_c(Z,2^{\chi(Z)})\chi(Z)}   
= 2^{swL_c(Z,2^{\aleph_0})} $.  In this model, using \ref{prop5}(d) as  $|Z| < 2^{\aleph_0}$, $ 2^{swL_c(Z,2^{\aleph_0})} = 2^{\aleph_0} = \aleph_2$.   \qed}
\end{example}  

In light of \ref{prop5}(f), we have the following corollary.
\begin{corollary}
If $X$ is normal then $|X|\leq 2^{swL(X,2^{\chi(X)})\chi(X)}$.
\end{corollary}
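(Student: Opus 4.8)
The plan is to deduce this corollary directly from Theorem~\ref{thmUry} combined with Proposition~\ref{prop5}(f). The content of the statement is that when $X$ is normal, the Urysohn-space bound $|X|\leq 2^{swL_c(X,2^{\chi(X)})\chi(X)}$ can be restated with $swL_c$ replaced by the (a priori smaller, but in the normal case equal) global invariant $swL(X,2^{\chi(X)})$.

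First I would observe that every normal Hausdorff space is Urysohn: given distinct points $x,y$, normality (applied to the disjoint closed sets $\{x\},\{y\}$, or more simply regularity at $x$) yields an open set $U\ni x$ with $\closure{U}\not\ni y$, and then separating $\closure U$ from $y$ gives disjoint open sets with disjoint closures. Hence Theorem~\ref{thmUry} applies and gives $|X|\leq 2^{swL_c(X,2^{\chi(X)})\chi(X)}$. Next, I would invoke Proposition~\ref{prop5}(f), which asserts that for normal $X$ we have $swL_c(X,\lambda)=swL(X,\lambda)$ for any infinite cardinal $\lambda$; applying this with $\lambda=2^{\chi(X)}$ gives $swL_c(X,2^{\chi(X)})=swL(X,2^{\chi(X)})$. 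Substituting this equality into the exponent of the bound from Theorem~\ref{thmUry} yields precisely $|X|\leq 2^{swL(X,2^{\chi(X)})\chi(X)}$.

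There is essentially no obstacle here; the corollary is a one-line consequence of the two cited facts. The only point requiring a sentence of care is confirming that normal (Hausdorff) spaces fall within the scope of Theorem~\ref{thmUry}, i.e. that they are Urysohn — but this is standard and follows immediately from regularity plus one more separation step. I would therefore write the proof as: ``Since $X$ is normal and Hausdorff it is Urysohn, so by Theorem~\ref{thmUry} we have $|X|\leq 2^{swL_c(X,2^{\chi(X)})\chi(X)}$. By Proposition~\ref{prop5}(f), $swL_c(X,2^{\chi(X)})=swL(X,2^{\chi(X)})$, and the conclusion follows.''
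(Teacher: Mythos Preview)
Your proposal is correct and matches the paper's approach exactly: the paper simply states ``In light of \ref{prop5}(f), we have the following corollary,'' implicitly combining Theorem~\ref{thmUry} with Proposition~\ref{prop5}(f) just as you do. Your added remark that normal Hausdorff spaces are Urysohn fills in the one step the paper leaves tacit.
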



\section{Open Problems}

In Examples, \ref{E3.9}, \ref{E4.7}. and \ref{E5.4}, we used a model of ZFC to show that the three major cardinality inequalities developed  in this paper actually improve the known corresponding cardinality inequalities.   
We had hope that the Kat\v etov extension $\kappa\omega$ of $\omega$ would be the ZFC space $X$ such that  $$2^{skL(X,2^{\psi(X)t(X)})\psi(X)t(X)}  < 2^{L(X)\psi(X)t(X)}.$$ Our motivation being that $2^{L(\kappa\omega)\psi(\kappa\omega)t(\kappa\omega)} = 2^{2^{\frak c}}$ whereas $|\kappa\omega| = 2^{\frak c}$.  However, $2^{skL(\kappa\omega,2^{\psi(\kappa\omega)t(\kappa\omega)})\psi(\kappa\omega)t(\kappa\omega)} = 2^{skL(\kappa\omega,2^{\aleph_0})} = 2^{2^{\frak c}}$  is also true. That is, both cardinality bounds are excessive.  We conclude this paper with the problem of finding ZFC spaces that show that  the  cardinality inequalities of  \ref{E3.9}, \ref{E4.7}, and \ref{E5.4} can be strict.

\end{document}